\documentclass[12pt]{amsart}
\setcounter{tocdepth}{1}
\usepackage{amsmath}
\usepackage{amssymb}
\usepackage{epsfig}
\usepackage{graphicx}
\numberwithin{equation}{section}

\input xy
\xyoption{all}

\newcommand{\bC}{\mathbb C}

\newcommand{\bN}{\mathbb N}
\newcommand{\bP}{\mathbb P}
\newcommand{\bQ}{\mathbb Q}
\newcommand{\bR}{\mathbb R}
\newcommand{\bZ}{\mathbb Z}

\newcommand{\cA}{\mathcal A}
\newcommand{\cB}{\mathcal B}

\newcommand{\cE}{\mathcal E}

\newcommand{\cG}{\mathcal G}
\newcommand{\cH}{\mathcal H}
\newcommand{\cI}{\mathcal I}

\newcommand{\cO}{\mathcal O}

\newcommand{\dv}{\mathrm{dv}}

\newcommand{\ra}{\rightarrow}

\newcommand{\Pic}{\operatorname{Pic}}

\newcommand{\Hom}{\operatorname{Hom}}
\newcommand{\Ext}{\operatorname{Ext}}
\newcommand{\wLambda}{\widetilde{\Lambda}}
\newcommand{\Aut}{\mathrm{Aut}}

\theoremstyle{plain}
\newtheorem{prop}{Proposition}

\newtheorem{theo}[prop]{Theorem}

\newtheorem{lemm}[prop]{Lemma}

\theoremstyle{definition}
\newtheorem{defi}[prop]{Definition}

\newtheorem{ques}[prop]{Question}

\newtheorem{rema}[prop]{Remark}

\newtheorem{exam}[prop]{Example}

\makeatother
\makeatletter

\author{Brendan Hassett}
\address{Department of Mathematics\\
Rice University, MS 136 \\
Houston, Texas  77251-1892 \\
USA}
\email{hassett@rice.edu}

\author{Yuri Tschinkel}
\address{Courant Institute\\
                New York University \\
                New York, NY 10012 \\
                USA }
\email{tschinkel@cims.nyu.edu}

\address{Simons Foundation\\
160 Fifth Avenue\\
New York, NY 10010\\
USA}

\title[Extremal rays and automorphisms]{Extremal rays and automorphisms of holomorphic symplectic varieties}

\begin{document}
\date{\today}

\maketitle

\section{Introduction}
\label{sect:intro}

For last fifteen years, numerous authors have studied the birational geometry
of projective irreducible
holomorphic symplectic varieties $X$, seeking to relate extremal contractions $X\ra X'$
to properties of the Hodge structures on $H^2(X,\bZ)$ and $H_2(X,\bZ)$, regarded as lattices
under the Beauville-Bogomolov form. Significant contributions have been made by
Huybrechts, Markman, O'Grady, Verbitsky, and many others \cite{HuyBasic},  \cite{MarkJAG},
\cite{ogrady}, \cite{verb}, see also \cite{huy}.

The introduction of Bridgeland stability conditions by Bayer and Macr\`i provided
a conceptual framework for understanding birational contractions and their
centers \cite{BM2,BM1}. In particular, one obtains a transparent classification
of extremal birational contractions, up to the action of monodromy,
for varieties of K3 type \cite{BHT}.

Here we elaborate the Bayer-Macr\`i machinery through concrete examples and 
applications. We start by stating the key theorem and organizing the
resulting extremal rays in lattice-theoretic terms; see Sections~\ref{sect:generalities}
and \ref{sect:formal}. We describe exceptional loci in small-dimensional cases in 
Sections~\ref{sect:loci} and \ref{section:EOR}. Finding concrete examples
for each ray in the classification can be computationally involved; we 
provide a general mechanism for writing down Hilbert schemes with prescribed
contractions in Section~\ref{sect:OER}. Then we turn to applications.
Section~\ref{sect:auto} addresses a question of Oguiso and 
Sarti on automorphisms of Hilbert schemes.
Finally, we show that the ample cone of a polarized variety 
$(X,h)$ of K3 type cannot
be read off from the Hodge structure on $H^2(X,\bZ)$ in Section~\ref{sect:ambiguity};
this resolves a question of Huybrechts.

\

{\bf Acknowledgments}: 
The first author was supported by NSF grants 1148609 and 1401764.
The second author was supported by NSF grant 1160859.
We are grateful to B.~Bakker, D.~Huybrechts, E.~Macr\`i, and A.~Sarti for helpful
discussions;
the manuscript benefited from suggestions by K.~Hulek.
G.~Mongardi has informed us of joint work with Knutsen and Lelli-Chiesa 
addressing closely related questions.
We especially thank A.~Bayer for allowing us to use material arising
out of our collaboration and explaining subtle aspects of his work with Macr\`i.

\section{Recollection of general theorems}
\label{sect:generalities}

Let $X$ be deformation equivalent to the Hilbert scheme of length $n\ge 2$
subschemes of a K3 surface. 
Markman \cite[Cor.~9.5]{MarkSurv} describes an extension of lattices 
$
H^2(X,\bZ) \subset \wLambda
$
and weight-two Hodge structures
$
H^2(X,\bC) \subset \wLambda_{\bC}
$
characterized as follows:  
\begin{itemize}
\item{the orthogonal complement of $H^2(X, \bZ)$ in $\wLambda$ has rank one, and is generated by
a primitive vector of square $2n-2$;}
\item{as a lattice
$\wLambda \simeq U^4 \oplus (-E_8)^2$
where $U$ is the hyperbolic lattice and $E_8$ is the positive definite
lattice associated with the corresponding Dynkin diagram;}
\item{there is a natural extension of the monodromy action on $H^2(X,\bZ)$ 
to $\wLambda$; the induced action on $\wLambda/H^2(X,\bZ)$
is encoded by a character $cov$ (see \cite[Sec.~4.1]{MarkJAG});}
\item{we have the following Torelli-type statement:  $X_1$ and $X_2$ are
birational if and only if there is Hodge isometry
$$\wLambda_1 \simeq \wLambda_2$$
taking $H^2(X_1,\bZ)$ isomorphically to $H^2(X_2,\bZ)$;}
\item{if $X$ is a moduli space $M_v(S)$ of sheaves over a K3 surface $S$
with Mukai vector $v$ then there is an isomorphism from $\wLambda$ to the
Mukai lattice of $S$ taking 
$H^2(X,\bZ)$ to $v^{\perp}$.}
\end{itemize}
Generally, we use $v$ to denote a primitive generator for the orthogonal complement
of $H^2(X,\bZ)$ in $\wLambda$.  Note that $v^2=\left(v,v\right) = 2n-2$.
When $X \simeq M_v(S)$ we may take the Mukai vector $v$ as the generator.  

\begin{exam} \label{exam:hilb}
Suppose that $X=S^{[n]}$ for a K3 surface $S$ so that
$$\wLambda = U \oplus H^2(S,\bZ)$$
with $v$ in the first summand. Then we can write
$$
H^2(S^{[n]},\bZ)=\bZ \delta \oplus H^2(S,\bZ)
$$
where $\delta$ generates $v^{\perp} \subset U$ and
satisfies $\left(\delta,\delta\right)=-2(n-1)$.
\end{exam}

There is a canonical homomorphism
$$\theta^{\vee}:\wLambda \twoheadrightarrow H_2(X,\bZ)$$
which restricts to an inclusion
$$H^2(X,\bZ) \subset H_2(X,\bZ)$$
of finite index.  
By extension, it induces a $\bQ$-valued Beauville-Bogomolov form on
$H_2(X, \bZ)$.

\begin{exam} \label{exam:hilb2}
Retaining the notation of Example~\ref{exam:hilb}: 
Let $\delta^{\vee} \in H_2(X,\bZ)$ be the class orthogonal
to $H^2(S,\bZ)$ such that $\delta \cdot \delta^{\vee}=-1$.
We have $\theta^{\vee}(\delta)=2(n-1)\delta^{\vee}$.
\end{exam}

Assume $X$ is projective.  
Let $H^2(X)_{alg} \subset H^2(X,\bZ)$ and $\wLambda_{alg}\subset \wLambda$ denote
the algebraic classes, i.e., the integral classes of type $(1,1)$.  
The Beauville-Bogomolov form on $H^2(X)_{alg}$ has signature $(1,\rho(X)-1)$,
where $\rho(X)=\dim(H^2_{alg}(X)).$
The {\em Mori cone} of $X$ is defined as the closed cone in $H_2(X,\bR)_{alg}$
containing the classes of algebraic curves in $X$.  The {\em positive cone} (or more accurately,
non-negative cone) in
$H^2(X,\bR)_{alg}$ is the closure of the connected component of the cone
$$\{D \in H^2(X,\bR)_{alg}: D^2 > 0 \}$$
containing an ample class. 
The dual of the positive cone in $H^2(X,\bR)_{alg}$ is the 
positive cone.

\begin{theo} \label{theo:main} \cite{BHT}
Let $(X,h)$ be a polarized holomorphic symplectic manifold as above.
The Mori cone in $H_2(X,\bR)_{alg}$ is generated by classes
in the positive cone 
and the images under $\theta^{\vee}$ of the following:
\begin{equation} \label{eqn:theta}
\{a \in \wLambda_{alg}: a^2 \ge -2, \left| \left(a,v\right)\right| \le v^2/2, \left(h,a\right)>0\}.
\end{equation}
\end{theo}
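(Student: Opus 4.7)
The plan is to follow the approach of \cite{BHT}, which combines Markman's Torelli-type theorem with the Bayer-Macr\`i classification of walls arising from Bridgeland stability conditions on derived categories of K3 surfaces. The first step is to reduce to the case $X = M_v(S)$, a moduli space of Gieseker-stable sheaves on a K3 surface with primitive Mukai vector $v$. Using Verbitsky's global Torelli theorem together with Markman's monodromy results, any polarized $(X,h)$ of K3 type deforms, through a family of polarized hyperk\"ahler manifolds with constant algebraic part of $\wLambda$, to such a moduli space. Both the Mori cone and the set of lattice vectors satisfying the conditions in \eqref{eqn:theta} are determined by the algebraic Hodge data, so it suffices to prove the theorem in the moduli-of-sheaves case.

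With $X = M_v(S)$ at hand, I would invoke the Bayer-Macr\`i machinery. For each generic stability condition $\sigma$ in a distinguished connected component of the stability manifold $\Stab^{\dagger}(S)$, there is a smooth projective hyperk\"ahler moduli space $M_\sigma(v)$ birational to $M_v(S)$; Bayer and Macr\`i construct a natural map $\ell$ from this component to $\NS(M_\sigma(v))_{\bR}$ whose image tiles the movable cone of $M_v(S)$ by images of open chambers. Walls in the stability manifold correspond to walls of the movable cone or to divisorial contractions, and each wall is cut out by a rank-two primitive hyperbolic sublattice $H \subset \wLambda_{alg}$ containing $v$.

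The main technical step is to match this wall data to the arithmetic conditions in \eqref{eqn:theta}. Fix a wall with associated sublattice $H = \langle v, a \rangle$ for some $a \in \wLambda_{alg}$. The Bayer-Macr\`i analysis shows that $H$ produces an actual wall if and only if it supports a destabilizing short exact sequence $0 \to A \to E \to B \to 0$ with $A$ and $B$ semistable of Mukai vectors $a$ and $v - a$. Existence of the pieces forces $a^2 \ge -2$ and $(v-a)^2 \ge -2$, which after swapping $a$ and $v-a$ if needed is equivalent to the pair $a^2 \ge -2$ and $|(a,v)| \le v^2/2$; this symmetrization is the source of the absolute value. The extremal curve contracted across the wall is then a positive multiple of $\theta^{\vee}(a)$, and the orientation $(h,a) > 0$ selects the effective representative in the Mori cone. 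Dualizing the chamber description of the nef cone exhibits the Mori cone as generated by the classes $\theta^{\vee}(a)$ together with classes in the positive cone, the latter filled out by rational curves of nonnegative BB-square that are plentiful on hyperk\"ahler manifolds.

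The main obstacle is the third step: executing the wall-crossing analysis carefully enough to extract precisely the inequalities $a^2 \ge -2$ and $|(a,v)| \le v^2/2$, since one must simultaneously verify existence of semistable objects realizing the Jordan-H\"older pieces and track the numerical symmetry between $a$ and $v - a$. A secondary subtlety lies in the reduction step: one must check that the deformation from $(X,h)$ to $(M_v(S),h)$ neither creates nor destroys extremal rays along a generic path in the polarized moduli space, which follows from the local constancy of the Bayer-Macr\`i chamber structure under Hodge-isometric deformations.
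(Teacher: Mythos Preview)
The paper does not actually prove Theorem~\ref{theo:main}: it is stated with a citation to \cite{BHT} and no argument is given. So there is no in-paper proof to compare against; your proposal is a sketch of the argument in the cited reference, which is the intended source.

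Your outline is broadly faithful to the \cite{BHT} strategy: reduce to a moduli space $M_v(S)$ via deformation (using Markman's monodromy results and density of moduli-space points in moduli), then read off the Mori cone from the Bayer--Macr\`i chamber decomposition and their numerical classification of walls. One imprecision worth flagging: you write that ``$a^2 \ge -2$ and $(v-a)^2 \ge -2$, which after swapping $a$ and $v-a$ if needed is equivalent to the pair $a^2 \ge -2$ and $|(a,v)| \le v^2/2$.'' These are not literally equivalent as conditions on a fixed $a$; the Bayer--Macr\`i statement \cite[Thm.~12.1]{BM2} is that the saturated rank-two lattice $\cH$ contains \emph{some} class $a$ with $a^2\ge -2$ and $0\le (a,v)\le v^2/2$, and the paper's Section~\ref{sect:formal} (items (2)--(5)) records exactly how one passes between the two parametrizations. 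The swap you describe is between $a$ and $-a$ (to arrange $(a,v)\ge 0$), not between $a$ and $v-a$. This does not break the overall strategy, but in a full proof you would want to invoke the lattice-theoretic classification of walls as stated, rather than derive the inequalities from existence of Jordan--H\"older factors alone.
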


\section{Formal remarks on Theorem~\ref{theo:main}}
\label{sect:formal}

\begin{enumerate}
\item{For $a$ as enumerated in (\ref{eqn:theta}) write $R:=\theta^{\vee}(a) \in H_2(X,\bZ)$.
Recall that $\left(R,R\right)<0$ and $R$ is extremal in the cone described in Theorem~\ref{theo:main}
if and only if $R$
generates the extremal ray of the birational contraction $X \ra X'$ 
associated with the corresponding wall \cite[\S 5,12]{BM2}.}
\item{As discussed in \cite[Th.~12.1]{BM2}, the walls in Theorem~\ref{theo:main}
also admit a natural one-to-one correspondence with
\begin{equation} \label{eqn:alt}
\{\hat{a} \in \wLambda_{alg}: \hat{a}^2 \ge -2, 0 \le \left(\hat{a},v\right) \le v^2/2,
\left(h,\hat{a}\right)>0 \text{ if } \left(\hat{a},v\right)=0 \}.
\end{equation}
Indeed, in cases of (\ref{eqn:theta}) where $\left(a,v\right)<0$ we take
$\hat{a}=-a$. From now on, we utilize these representatives of the walls.}
\item{The saturation $\cH$ of the
lattice $\left<v,a\right>$ is the fundamental invariant
of each case. Observe that $\cH$ has signature $(1,1)$ if and only
if $\left(R,R\right)<0$. It is possible for $\cH \supsetneq \left<v,a\right>$;
however, in small dimensions we can express
$\cH=\left<v,a'\right>$ for some other $a'$ satisfying the conditions
in (\ref{eqn:theta}).} 
\item{Suppose $\cH$ has signature $(1,1)$.
Since $h$ is a polarization on $X$, we have
$$
\left(h,h\right)>0, \quad \left(h,v\right)=0
$$
and 
$\left<h,a,v\right>$ is a lattice of signature $(2,1)$.}
\item{$\cH$ has signature $(1,1)$ 
if and only if 
$$\left(a,a\right) \left(v,v\right) < \left(a,v\right)^2.$$
This is automatic if $\left(a,a\right) = -2$, or $\left(a,a\right)=0$ and
$\left(a,v\right) \neq 0$.  
Since $\left(a,v\right) \le \left(v,v\right)/2$ we necessarily have
\begin{equation} \label{eqn:aabound}
\left(a,a\right) < \left(v,v\right)/4,
\end{equation}
and
\begin{equation} \label{eqn:vminusabound}
\left(v-a,v-a\right) \ge \left(a,a\right) \ge -2.
\end{equation}
Moreover, we also find
\begin{equation} \label{eqn:mixedbound}
\left(a,v-a\right)\ge 1.
\end{equation}
If $\left(a,a\right)< 0$ this follows from $\left(a,v\right)\ge 0$.
If $\left(a,a\right)=0$ we deduce $\left(a,v-a\right)\ge 0$
but equality would violate our assumption on the signature of $\cH$.
If $\left(a,a\right) >0$ then
$$\left(a,v\right) > \sqrt{\left(a,a\right) \left(v,v\right)} >2 \left(a,a\right).
$$
Moreover, equality holds in (\ref{eqn:mixedbound}) precisely when
\begin{equation} \label{eqn:HCcase}
\left(a,a\right)=0, \quad \left(v,a\right)=1.
\end{equation}
}
\end{enumerate}
We shall rely on these observations in Section~\ref{section:EOR}
to streamline our enumeration of cases.

\section{Description of the exceptional loci}
\label{sect:loci}
We describe the exceptional loci $E$ of extremal contractions $X\ra X'$
mentioned in Section~\ref{sect:formal}, up to birational equivalence.
Our analysis follows
\cite{BM2,BM1} provided $X=M_v(T)$ for some K3 surface $T$,
or a moduli space of twisted sheaves over $X$.
We expect this is valid generally, however.
Indeed, generically this follows formally from the monodromy classification
of extremal rays of \cite{BHT}.

Let $v$ be a Mukai vector with $\left(v,v\right)>0$
and fix $\cH\ni v$ of signature $(1,1)$ as in Section~\ref{sect:formal}.
We use bounds (\ref{eqn:aabound}), (\ref{eqn:vminusabound}), and 
(\ref{eqn:mixedbound}) freely.

We define the {\em effective classes} of $\cH$ to be the monoid generated by $0$ and
the $D\in \cH$ satisfying $\left(D,D\right) \ge -2$ and $\left(v,D\right)>0$.
If $\cH$ admits a spherical class $s$ (with $\left(s,s\right)=-2$) orthogonal
to $v$ then we take one of $\{s,-s\}$ to be effective,
the one meeting the polarization $h$ positively.
This differs from the definition of \cite[5.5]{BM2} in that it does
not depend on the choice of a stability condition.  
An {\em irreducible} spherical class is one that is
indecomposable in the effective monoid. 
An effective element $D$ is {\em reducible} if one of the following holds:
\begin{itemize}
\item{$D$ is spherical but not irreducible;}
\item{$D$ is not spherical and there exists an irreducible spherical class $s$ such that $\left(s,D\right)<0$;}
\item{$D$ is isotropic but not primitive.}
\end{itemize}
Every effective $D$ with $\left(D,D\right)<-2$ is necessarily reducible.  
\begin{rema}
Suppose $\cH=\Pic(S)$ for a K3 surface $S$ and $v$ is big and semiample on $S$. Then the 
reducible effective classes $D$ are those for which $|mD|$ admits no irreducible
divisors for any $m>0$.  
\end{rema}

\begin{defi}
A {\em Hilbert-Chow decomposition} of $v$ is an expression
$$v=a+b \quad a,b \in \cH \text{ effective},$$
where $\left(a,a\right)=0$ and $\left(v,a\right)=1$.
An {\em irreducible decomposition} takes the form
$$v=a + b, \quad a,b \in \cH \text{ irreducible effective.}$$
Hilbert-Chow and irreducible decompositions are collectively called
{\em basic decompositions}.
A basic decomposition is {\em distinguished} if $\cH=\left<a,b\right>$.
We do not keep track of the order of $a$ and $b$. 
\end{defi}
Note that either $\left(h,a\right)>0$ or $\left(h,b\right)>0$; moreover,
$\left(v,a\right)$ or $\left(v,b\right)$ is $\le \left(v,v\right)$;
finally, $\left(a,a\right),\left(b,b\right) \ge -2$.
Thus basic decompositions 
are instances of the walls indexed by 
(\ref{eqn:theta}) and (\ref{eqn:alt}) and having negative Mukai form.
Recall these index extremal rays of birational contractions.

Each $\cH$ associated with an extremal ray gives rise to a distinguished
decomposition; some $\cH$ admit multiple basic decompositions.
We will explain below why the isotropic vectors require extra care.
\begin{ques} \label{ques:organize}
Let $\cH$ arise from an extremal ray of $X \ra X'$ as above.
Is there a bijection between basic decompositions of $v$
and irreducible components of the exceptional locus 
of the contraction?
\end{ques}
Bayer and Macr\`i \cite[\S 14]{BM2} describe a more encompassing correspondence
between strata of the exceptional locus
and decompositions of $v$, under the
assumption that $\cH$ admits no isotropic or spherical 
vectors, i.e., elements $w$ with $\left(w,w\right)=0,-2$.
This assumption is never satisfied in small dimensions, however.
Based on this evidence and the examples we have
computed, we assume Question~\ref{ques:organize} has a positive
answer in the analysis below.

Let $M_a$ denote the moduli space of stable objects of type $a$
and $M_{v-a}$ the moduli space of stable objects of type $v-a$.
We only care about these up to birational equivalence so we
need not specify the precise stability condition.
A typical element of $E$ corresponds to an extension
$$0 \ra \cA \ra \cE \ra \cB \ra 0$$
where $\cA \in M_a,\cB \in M_{v-a},$ and $\cE$ represents
an element of $\bP(\Ext^1(\cB,\cA))$. (Generally, one has exact triangles rather than
extensions but our goal here is only to sketch representative examples for 
each monodromy orbit.)  These have expected dimension
$$\dim(M_a) + \dim(M_{v-a}) + \dim(\Ext^1(\cB,\cA)))-1=
\left(v,v\right) - \left(a,v-a\right) + 3$$
which equals
$$\dim(X)+1-\left(a,v-a\right),$$
i.e., the expected codimension of $E$ is
\begin{equation} \label{eqn:expcodim}
\left(a,v-a\right)-1\ge 0.
\end{equation}
When there is strict equality the geometry can be encapsulated by the diagram:
$$\begin{array}{ccccc}
\bP^{\left(a,v-a\right)-1} & \longrightarrow  & E & \longrightarrow & M_v(T) \\
      &     & \downarrow &   &  \\
      &     & M_a(T)\times M_{v-a}(T) &   & 
\end{array}
$$

\subsection*{Hilbert-Chow contractions}
The case $\left(\ref{eqn:HCcase}\right)$ requires additional explanation;
this is the `Hilbert-Chow' case of \cite[\S 10]{BM1}, \cite[\S 5]{BM2}:
For concreteness, take $X=M_v(T)$ with Mukai vector $v=(1,0,1-n)$, where 
$\dim(X)=2n$; set $a=(0,0,-1)$ so that $v-a=(1,0,2-n)$.
Thus 
$$
M_v(T)=T^{[n]}, \quad M_{v-a}(T)=T^{[n-1]},
$$ 
and $M_a(T)$ parametrizes
shifted point sheaves $\cO_p[-1]$.
Given distinct $p_1,\ldots,p_n \in T$, the natural inclusion of
ideal sheaves gives an exact sequence
$$0 \ra \cI_{p_1,\ldots,p_n} \ra \cI_{p_1,\ldots,p_{n-1}} \ra \cI_{p_1,\ldots,p_{n-1}}|p_n\simeq \cO_{p_n} \ra 0$$
and thus an exact triangle
$$ \cO_{p_n}[-1] \ra \cI_{p_1,\ldots,p_n} \ra \cI_{p_1,\ldots,p_{n-1}}.$$
This reflects the fact that the vector space
$$\Hom(\cI_{p_1,\ldots,p_{n-1}},\cO_{p_n})=
\Ext^1(\cI_{p_1,\ldots,p_{n-1}},\cO_{p_n}[-1])\simeq \bC.$$
Now suppose that $p_{n-1}=p_n$; then
$$\Hom(\cI_{p_1,\ldots,p_{n-1}},\cO_{p_{n-1}})= T_{p_{n-1}}\simeq \bC^2$$
which means that $E$ is birationally a $\bP^1$-bundle over 
$$\Delta=\{(\Sigma,p): p\in \Sigma \} \subset T^{[n-1]}\times T\simeq M_{v-a}(T)\times M_a(T).$$
In particular, the exceptional locus in the Hilbert-Chow case is irreducible.

\subsection*{Isotropic vectors}
Suppose we have a decomposition of the form
$$v=\underbrace{a + \cdots + a}_{N \text{ times}} + b, \quad N\ge 1,$$
where $a$ and $b$ are primitive and $a$ is isotropic. We continue to assume
that $\left(v,a\right),\left(v,b\right) \ge 0$ and $\left(b,b\right) \ge -2$.
We analyze strata in the exceptional locus associated with such
decompositions.

Consider the moduli space $M_{Na}(T)$ for an appropriate generic stability
condition \cite[Thm.~2.15]{BM2}. Since $a$ is isotropic and the generic point 
of $M_a(T)$ parametrizes simple objects, we have
$$M_{Na}(T) = \underbrace{M_a(T) \times \cdots \times M_a(T)}_{N \text{ times }}$$
which has dimension $2N$. Moreover, $M_b(T)$ has dimension $\left(b,b\right)+2 \ge 0$
and the generic point is stable. Given $\cA \in M_a(T)$ and $\cB \in M_b(T)$
the non-split extensions
$$ 0 \ra \cA \ra \cE \ra \cB \ra 0$$
are parametrized by a projective space of dimension 
$$\left(a,b\right)-1=\left(a,v\right)-1.$$
Now consider those of the form
$$ 0 \ra \cA^{\oplus N} \ra \cE' \ra \cB \ra 0,$$
where we assume the restriction to each summand $\cA$ is non-trivial.
The isomorphism classes of $\cE'$ that arise in this way---neglecting the extension
data---are parametrized by 
$$\underbrace{\bP^{\left(a,v\right)-1} \times \cdots \times \bP^{\left(a,v\right)-1}}_{N \text{ times }}.$$
Putting everything together, the expected codimension of the corresponding stratum is
$$N(\left(v,a\right)-1),$$
which is typically larger than the codimension (\ref{eqn:expcodim}) of the stratum
associated with the decomposition
$$v=a + (v-a).$$
Note that when 
$\left(v,a\right)=1$ we are in the Hilbert-Chow case discussed above.

Thus decompositions involving isotropic vectors with multiplicities correspond to non-maximal
strata.

\section{Enumeration of rays}
\label{section:EOR}
In this section, for each monodromy orbit of extremal rays we describe the geometry
of the exceptional locus of the associated contraction.
This completes the analysis started in \cite{HT4} by employing the recent work of Bayer
and Macr\`i \cite{BM2,BM1}.
We organize the information first by dimension 
(or equivalently, by $\left(v,v\right)$) and then by the magnitude
of $\left(v,a\right)$.  
Such explicit descriptions have been used in connection with the following problems:
\begin{itemize}
\item{constructing explicit Azumaya algebras realizing 
transcendental Brauer-Manin obstructions to weak approximation and the
Hasse principle \cite{HVAV,HVA};}
\item{modular constructions of isogenies between K3 surfaces and
interpretation of moduli spaces of K3 surface with level structure
\cite{McKSaTaVA};}
\item{explicit descriptions of derived equivalences among K3 surfaces and perhaps
varieties of K3 type;}
\item{analysis of birational and biregular automorphisms of holomorphic symplectic
varieties, see e.g. \cite{HTJus,BCNWS}.}
\end{itemize}
For example, when we have an exceptional divisor of the form
$$\begin{array}{ccc}
\bP^{r-1} & \ra & E \\
	  &     & \downarrow \\
 	  &     & S \times M
\end{array}$$
where $S$ is a K3 surface and $M$ is holomorphic symplectic (perhaps a point!),
we may interpret $M$ as a parameter space of
Brauer-Severi varieties over $S$. These naturally defined families can be quite
useful for arithmetic applications.

\

We write $S^{[m]}$ as shorthand for the deformation equivalence class 
of the Hilbert scheme of a K3 surface.  Note that the notation
$S\times S^{[2]}$ just means a product of a K3 surface and a manifold
`Discriminant' refers to the lattice $\left<v,a\right>$.
We note cases where there are inclusions
$$\cH=\left<v,a\right> \supsetneq \left<v',a'\right>=\cH'$$
as then the exceptional locus associated to $\cH$ may be reducible,
as noted in Question~\ref{ques:organize}.
Here we analyze whether these arise from basic decompositions of $v$.

\subsection{Dimension four} The case $\left(v,v\right)=2$
has been explored in \cite{HT2}.

\begin{center}
\begin{tabular}{|c|c|c|l|l|}
\hline
$\left(a,a\right)$ & $\left(a,v\right)$ & $\left(v-a,v-a\right)$  & \text{\bf Discriminant} &  \text{\bf Interpretation} \\
\hline
-2 & 0  &   0 & -4 &           \text{$\bP^1$-bundle over S} \\
\hline
-2 & 1  & -2 & -5&    \text{$\bP^2$} \\
\hline
 0 & 1  & 0  & -1&  \text{$\bP^1$-bundle over S} \\
\hline
\end{tabular}
\end{center}

The only inclusion of lattices takes the first case to the third:
$$
\begin{array}{c|cc}
	& v & a \\
\hline 
v & 2 & 1 \\
a & 1 & 0 
\end{array}
\supsetneq
\begin{array}{c|cc}
	& v & a' \\
\hline 
v & 2 & 2 \\
a' & 2 & 0 
\end{array}
$$
This is induced by $a'=2a$ which does not correspond to a basic decomposition.
We know the exceptional locus of the Hilbert-Chow contraction is irreducible,
so Question~\ref{ques:organize} has a positive answer in this case.


\subsection{Dimension six}
\label{subsect:fourcase}

We take $\left(v,v\right)=4$.
The case of Lagrangian $\bP^3$'s, where $\left(a,a\right)=-2$ and $\left(a,v\right)=2$,
was examined in \cite{HHT3}.

\begin{center}
\begin{tabular}{|c|c|c|c|l|}
\hline
$\left(a,a\right)$ & $\left(a,v\right)$  & $\left(v-a,v-a\right)$ & \text{\bf Discriminant}   & \text{\bf Interpretation} \\
\hline
 -2 & 0  & 2& -8&                                      \text{$\bP^1$-bundle over $S^{[2]}$} \\
\hline
 -2 & 1 & 0 & -9 &  \text{$\bP^2$-bundle over $S$} \\
\hline
 -2 & 2 & -2 & -12&   \text{$\bP^3$} \\
\hline
 0 & 1 & 2 & -1  & \text{$\bP^1$-bundle over $S\times S$} \\
\hline
 0 & 2 & 0 & -4   &\text{$\bP^1$-bundle
over $S\times S'$,} \\
& & & &  \text{$S$ and $S'$ are isogenous}\\
\hline
\end{tabular}
\end{center}
Again, the only possible inclusion involves the Hilbert-Chow case, which
has irreducible exceptional locus.

The last entry was omitted in \cite[Table H3]{HT4} but was included
in the general conjecture proposed in that paper.
We sketch the geometry in this case:
Suppose 
$$
X=M_v(T),\quad v=(r,Nh,s), \quad N\neq 0
$$ 
for 
some K3 surface $(T,h)$; we assume that $a=(r',N'h,s')$.
Express
$$H^2(X,\bZ)=v^{\perp} \subset \wLambda\simeq U \oplus H^2(T,\bZ)$$
so there is a saturated embedding of the primitive cohomology
$$H^2(T,\bZ)_{\circ}=h^{\perp} \hookrightarrow H^2(X,\bZ).$$
The factors of the center of $X\ra X'$ are $S=M_a(T)$ and $S'=M_{v-a}(T)$,
which have cohomology groups
$$H^2(M_a(T),\bZ)= a^{\perp}/\bZ a , \quad
H^2(M_{v-a}(T),\bZ)= (v-a)^{\perp}/\bZ (v-a).$$
We also have embeddings
$$H^2(T,\bZ)_{\circ} \hookrightarrow H^2(S,\bZ), \quad
H^2(T,\bZ)_{\circ} \hookrightarrow H^2(S',\bZ),$$
that fail to be saturated in some cases.

\subsection{Dimension eight} Here we have $\left(v,v\right)=6$:

\begin{center}
\begin{tabular}{|c|c|c|c|l|}
\hline
$\left(a,a\right)$ & $\left(a,v\right)$  & $\left(v-a,v-a\right)$ & \text{\bf Discriminant} &  \text{\bf Interpretation} \\
\hline
 -2 & 0  & 4& -12 &                                 \text{$\bP^1$-bundle over $S^{[3]}$} \\
\hline
 -2 & 1  & 2 & -13 &   \text{$\bP^2$-bundle over $S^{[2]}$} \\
\hline
 -2 & 2  & 0& -16 &  \text{$\bP^3$-bundle over $S$} \\
\hline
 -2 & 3  & -2  & -21 & \text{$\bP^4$} \\
\hline
 0 & 1   &  4& -1 &\text{$\bP^1$-bundle over $S\times S^{[2]}$} \\
\hline
 0 & 2   & 2& -4 & \text{$\bP^1$-bundle
over  $S' \times S^{[2]}$} \\
& & & &  \text{$S,S'$ are isogenous}\\
\hline
 0 & 3   & 0& -9 & \text{$\bP^2$-bundle over $S \times S'$} \\
& & & &   \text{$S,S'$ are isogenous}\\

\hline
\end{tabular}
\end{center}
The last entry was also omitted in \cite[Table H4]{HT4} but included
in the general conjecture. The geometry is similar to example 
in Section~\ref{subsect:fourcase}.

Here we do have an inclusion of lattices not involving the Hilbert-Chow example.
Write
$$
\cH=\begin{array}{c|cc}  
		 & v & a \\
		\hline
	v	& 6 & 2 \\
	a 	& 2 & 0 \end{array}
\supsetneq
\cH'=\begin{array}{c|cc}  
		 & v & a' \\
		\hline
	v	& 6 & 2 \\
	a' 	& 2 & -2 \end{array}
$$
with $a'=v-2a$ and $v-a'=2a$.
However, this is not a basic decomposition so it
does not arise from an additional component of the exceptional 
locus.  Note that in the decomposition
$$v=a'+(v-a')=a'+2a$$
we have an isotropic vector with multiplicity two.

\subsection{Dimension ten} In this case $\left(v,v\right)=8$.

\begin{center}
\begin{tabular}{|c|c|c|c|l|}
\hline
$\left(a,a\right)$ & $\left(a,v\right)$  & $\left(v-a,v-a \right)$ & \text{\bf Discriminant}  &  \text{\bf Interpretation} \\
\hline
 -2 & 0  &   6 & -16 &                                   \text{$\bP^1$-bundle over $S^{[4]}$} \\
\hline
 -2 & 1 & 4 & -17 &  \text{$\bP^2$-bundle over $S^{[3]}$} \\
\hline
 -2 & 2  & 2 & -20 &  \text{$\bP^3$-bundle over $S^{[2]}$} \\
\hline
 -2 & 3  & 0& -25&  \text{$\bP^4$-bundle over $S$} \\
\hline
 -2 & 4  & -2 & -32& \text{$\bP^5$} \\
\hline
 0 & 1 & 6 & -1  &\text{$\bP^1$-bundle over $S\times S^{[3]}$} \\
\hline
 0 & 2 & 4 & -4 &   \text{$\bP^1$-bundle
over  $S' \times S^{[3]}$} \\
& & & &  \text{$S,S'$ are isogenous}\\
\hline
 0 & 3 & 2 & -9  &\text{$\bP^2$-bundle over  $S' \times S^{[2]}$} \\
& & & & \text{$S,S'$ are isogenous} \\
\hline
0 & 4 & 0 & -16 & \text{$\bP^3$-bundle over  $S \times S'$} \\
& & & & \text{$S,S'$ are isogeneous} \\
\hline
\end{tabular}
\end{center}
The sublattices
$$
\begin{array}{c|cc} 
	& v & a \\
\hline
v	& 8 & 1 \\
a       & 1 & 0
\end{array}
\supsetneq
\begin{array}{c|cc} 
	& v & Na \\
\hline
v	& 8 & N \\
Na       & N & 0
\end{array}
$$
explain all embeddings among the lattices in the table above,
preserving $v$. None of these arise from basic decompositions.

\subsection{Higher dimensional data}
\label{subsect:twelve}
For later applications, we enumerate possible discriminants of the lattice $\left<v,a\right>$.
Note that this lattice need not be saturated in the Mukai lattice, so each row below
may correspond to multiple cases.
When $\left(v,v\right)=10$ we have
\begin{center}
\begin{tabular}{|c|c|l|}
\hline
$\left(a,a\right)$ & $\left(a,v\right)$  & \text{\bf Discriminant} \\
\hline
$-2$   & $b=0,1,2,3,4,5$  & $-20-b^2$ \\
$0$ & $b=0,1,2,3,4,5$ & $-b^2$ \\
$2$ & $b=5$           & $20-b^2$ \\
\hline
\end{tabular}
\end{center}
The case $\left(v,v\right)=12$ yields
\begin{center}
\begin{tabular}{|c|c|l|}
\hline
$\left(a,a\right)$ & $\left(a,v\right)$  & \text{\bf Discriminant} \\
\hline
$-2$   & $b=0,1,2,3,4,5,6$  & $-24-b^2$ \\
$0$ & $b=0,1,2,3,4,5,6$ & $-b^2$ \\
$2$ & $b=5,6$           & $24-b^2$ \\
\hline
\end{tabular}
\end{center}

\subsection{Characterizing Lagrangian $\bP^{n}$'s}
Smoothly embedded rational curves in a K3 surface
$$\ell:=\bP^1 \subset S$$
are characterized as $(-2)$-curves $\left(\ell,\ell\right)=-2$. 
Suppose that $X$ is deformation
equivalent to $S^{[n]}$ and we have a smoothly embedded
$$\bP^n \subset X$$
with $\ell \subset \bP^n$ a line. For $n=2,3$ we showed in \cite{HT2,HHT3}
that these are unique up to monodromy and satisfy
$$\left( \ell, \ell \right)=-\frac{n+3}{2}.$$
For $n=4$ Bakker and Jorza \cite{BJ} computed 
$$\left(\ell,\ell\right)=-\frac{7}{2}.$$
Furthermore, Bakker 
\cite[Cor.~23]{Bakker} has offered sufficient conditions to guarantee
that Lagrangian planes form a single monodromy orbit.  
Previously \cite[Thesis~1.1]{HT4}, we suggested that the intersection
theoretic properties of these classes should govern the cone of
effective curves. Markman, Bayer and Macr\`i offered counterexamples
to our original formulation \cite[Conj.~1.2]{HT4} in \cite[\S 10]{BM1}.

Our purpose here is to illustrate that there may be multiple orbits
of Lagrangian projective spaces under the monodromy action. For each $n$, the
lattice
$$\cG=\begin{array}{c|cc}
	& v & a \\
\hline
v    & 2(n-1) & n-1 \\
a    & n-1 & -2
\end{array}
$$
gives rise to a Lagrangian projective space with 
$$R=[\ell]=\pm \theta^{\vee}(a).$$
We expect a second orbit in cases when there exists an embedding
$\cG \hookrightarrow \cH$ as a finite index sublattice
(cf.~\cite[\S 14]{BM2} and Question~\ref{ques:organize}).
For these to exist, the discriminant 
$$\operatorname{disc}(\cG)=-(n-1)(n+3)$$
should be divisible by a square.
It is divisible by $9$ when $n\equiv 6\pmod{9}$;
we consider when $n=15$.

Let $\cH$ denote the lattice
$$\cH = \begin{array}{c|cc}
	     & v & a \\
	\hline  
	v & 28 & 14 \\
 	a & 14 & 6 
\end{array}
$$
which has discriminant $-28$.  
The lattice 
$$\cG = \begin{array}{c|cc}
	& v & a' \\
\hline
v &   28 & 14 \\
a' &   14 & -2 
\end{array}
$$
can be realized as an index three sublattice of $\cH$
via $a'=3a-v$. Thus we obtain {\em two} basic decompositions of $v$.
The associated contraction $X\ra X'$ thus should have reducible exceptional locus, with one component
isomorphic to $\bP^{15}$ and the other of codimension $\left(a,v-a\right)-1=7$.

\subsection{Another example with interesting exceptional locus}
Consider the lattice
$$\cH= \begin{array}{r|cc}
	& v & a \\
\hline
v & 10 & 5 \\
a & 5 &  2
\end{array}
$$
associated with a $12$-dimensional holomorphic symplectic manifold $X$.

Note that $v^{\perp}$ is generated by a spherical class $s_1=v-2a$,
which we take to be effective.
Thus we have the decomposition
$$v=2a + (v-2a)$$
which yields a codimension-one stratum in $X$ isomorphic to a $\bP^1$-bundle
over $M_{2a}$.
The formalism of Bayer-Macr\`i \cite[Lem.~7.5]{BM2} implies there is one exceptional divisor 
arising from the extremal ray associated with $\cH$.

The lattice $\cH$ represents $(-2)$ infinitely many times.
Here are the vectors $b$ with $\left(b,b\right)=\pm 2$
and $\left(v,b\right) \ge 0$:
$$\begin{array}{r|cc}
b   & \left(b,b\right) & \left(v,b\right) \\
\hline
2a-v & -2 & 0 \\
s_3=3a-v & -2 & 5 \\
7a-2v & -2 & 15 \\
\vdots & \vdots & \vdots \\
4a-v & 2 & 10 \\
a   & 2 & 5 \\
v-a & 2 & 5 \\
3v-4a & 2 & 10 \\
\vdots & \vdots & \vdots \\
5v-7a & -2 & 15 \\
s_2=2v-3a & -2 & 5 \\
s_1=v-2a & -2 & 0 
\end{array}
$$
While we have 
$$\begin{array}{r|cc}
	& s_2 & s_3 \\
\hline
s_2 & -2 & 7 \\
s_3 & 7 & -2 
\end{array}.
$$
the decomposition
$$v=s_2+s_3$$
is not basic as $\left(s_1,s_2\right)<0$.
We do not expect this decomposition to correspond to a Lagrangian $\bP^6$.

\begin{exam}
Here is a concrete example: Let $T$ denote a K3 surface with
$\operatorname{Pic}(T)=\bZ f$ with $f^2=22$. Let 
$v=(1,f,6)$ and $a=(-1,0,1)$. Elements of $M_v(T)$ generically
take the form $\cI_{\Sigma}(f)$ where $\Sigma \subset T$
is of length six. The distinguished spherical class is
$v-2a=(3,f,4),$ 
which arises in the uniform construction of $T$
in \cite[\S 3]{MukaiGenus13}.
\end{exam}

\section{Orbits and extremal rays}
\label{sect:OER}

We fix a primitive vector $v \in \wLambda$ 
such that $v^{\perp}=H^2(X,\bZ)$, as in Section~\ref{sect:generalities}.
Write 
\begin{equation} \label{eqn:defvarrho}
\left(v,v\right)a-\left(a,v\right)v=M\varrho
\end{equation}
where $M>0$ and $\varrho \in v^{\perp}$ is primitive.
The {\em divisibility} $\dv(\varrho)$ is defined as the positive integer
such that 
$$
\left(\varrho,H^2(X,\bZ)\right)=\dv(\varrho)\bZ,
$$
so that
$R=\varrho/\dv(\varrho)$ represents (via duality) a class in $H_2(X,\bZ)$ and
an element of the discriminant group $d(H^2(X,\bZ))=H_2(X,\bZ)/H^2(X,\bZ)$.  

Note that $a$ projects to a negative class in $v^{\perp}$ if and only if
$$\left(a,a\right) \left(v,v\right) < \left(a,v\right)^2,$$
i.e., the lattice $\left<a,v\right>$ has signature $(1,1)$.
The autoduality of the positive cone and the fact that nef divisors 
have non-negative Beauville-Bogomolov
squares imply that the Mori cone contains the positive cone.  Thus we may
restrict our attention to $\varrho$ with $\left(\varrho,\varrho\right)<0$.

We exhibit representatives of these orbits in the special case where $X=S^{[n]}$;
we use Example~\ref{exam:hilb2} and write 
$\delta^{\vee}=\frac{1}{2n-2}\delta$ so that
$$d(H^2(S^{[n]},\bZ))=(\bZ/2(n-1)\bZ)\cdot \delta^{\vee}.$$
Our objective is to write down {\em explicit} examples where they arise from extremal
rational curves:
\begin{theo} \label{theo:existray}
Retain the notation introduced above and assume that $R^2<0$.  
Then there exists a K3 surface
$S$ with $\Pic(S) \simeq \bZ f$ that admits an extremal rational curve
$\bP^1 \subset S^{[n]}$ such that $\bR_{\geq 0}[\bP^1]$ is equivalent
to $\bR_{\geq 0}R$ under the action of the monodromy group.
\end{theo}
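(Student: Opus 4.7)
The plan is to realize the given monodromy orbit of extremal rays directly on a Hilbert scheme $S^{[n]}$, by producing a rank-one K3 surface $S$ whose algebraic Mukai lattice contains both the standard vector $v = (1, 0, 1-n)$ and an algebraic class $a$ with image $\theta^{\vee}(a) \in \bR_{\geq 0} R$ up to monodromy. Since $M_{(1, 0, 1-n)}(S) = S^{[n]}$ for any K3 surface $S$, the problem reduces to an arithmetic question: exhibit $a$ inside $\bZ e_0 \oplus \bZ f \oplus \bZ e_4$ (with $f$ a generator of $\Pic(S)$) realizing the pairings dictated by the abstract orbit of $R$.

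First I carry out the lattice-theoretic construction. Let $a$ be the representative from (\ref{eqn:alt}) associated to the orbit, with $(a, v) = b \geq 0$ and $(a, a) = c$, where $c$ is necessarily even. Writing $a = (r', s' f, t')$, the Mukai pairing yields
\begin{equation*}
(n-1) r' - t' = b, \qquad s'^2 f^2 - 2 r' t' = c.
\end{equation*}
Setting $s' = 1$ and treating $r'$ as a free parameter, the first equation gives $t' = (n-1) r' - b$, and the second becomes $f^2 = c + 2(n-1) r'^2 - 2 b r'$, which can be made an arbitrarily large positive even integer by taking $|r'|$ sufficiently large. This produces a primitive embedding of the saturated lattice $\cH = \langle v, a \rangle$ inside $\bZ e_0 \oplus \bZ f \oplus \bZ e_4$ with $v$ sent to the standard vector.

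Second, by surjectivity of the period map for K3 surfaces, there exists a K3 surface $S$ with $\Pic(S) = \bZ f$ and the chosen value of $f^2$. Theorem~\ref{theo:main} identifies $\theta^{\vee}(a) \in H_2(S^{[n]}, \bZ)$ as a class in the Mori cone, and since $R^2 < 0$ the observation in Section~\ref{sect:formal}(1) shows this class generates an extremal ray of a birational contraction $S^{[n]} \to X'$. The exceptional locus of this contraction, according to Section~\ref{sect:loci}, is a fibration in projective spaces over a product of moduli of simpler Mukai vectors; any line in any fiber is a $\bP^1 \subset S^{[n]}$ whose class is a positive rational multiple of $R$.

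The main obstacle is verifying that the extremal ray thus produced lies in the correct monodromy orbit of the original $R$. This orbit is classified by $\cH$ together with its embedding in $\wLambda$, equivalently by the discrete invariants $(R, R)$, $\dv(\varrho)$, and the image of $\varrho$ under the character $cov$ of \cite[Sec.~4.1]{MarkJAG}. One must check that the embedding $\cH \hookrightarrow \wLambda$ realized inside the Mukai lattice of our $S$ agrees with the original abstract embedding as marked extensions of $\cH$ by the distinguished vector $v$. The flexibility in $r'$ above, combined with the generous signature of $v^{\perp} \subset \wLambda$, should allow this matching via Eichler's criterion or Nikulin's theorems on extensions of lattice isometries; making this matching entirely explicit, especially in the presence of isotropic or spherical classes complicating the enumeration of Section~\ref{section:EOR}, is the delicate point of the argument.
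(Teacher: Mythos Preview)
Your construction correctly realizes the lattice $\langle v,a\rangle$ inside the algebraic Mukai lattice of a rank-one K3 surface $S$, and Theorem~\ref{theo:main} indeed places $\theta^{\vee}(a)$ in the Mori cone of $S^{[n]}$. But you have not shown that this class is \emph{extremal}. For $\Pic(S)=\bZ f$ the Mori cone of $S^{[n]}$ is two-dimensional; one extremal ray is $\delta^{\vee}$, and the other is determined by whichever wall-class $a_0\in\wLambda_{alg}$ among \emph{all} those satisfying (\ref{eqn:alt}) projects closest to the boundary of the positive cone. Your $a$ is merely one such class: there may well be others whose images lie strictly between $\theta^{\vee}(a)$ and the ample cone, so that $\theta^{\vee}(a)$ is interior to the Mori cone. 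Section~\ref{sect:formal}(1) does not assert that $(R,R)<0$ forces extremality; it characterizes which classes with $(R,R)<0$ arise from birational contractions, namely the extremal ones.

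This is exactly the substantive content of the paper's argument, carried by Lemma~\ref{lemm:produce}: using the continued-fraction expansion of $\sqrt{d/(n-1)}$, one arranges a (typically very large) value of $2d=f^2$ so that every competing $\varrho_0=\sigma f-\tau\delta$ with $C\le\varrho_0^2<0$ satisfies $\tau/\sigma<t/s$, whence $\varrho=sf-t\delta$ has maximal slope and generates the extremal ray opposite $\delta^{\vee}$. Your freedom in the parameter $r'$ lets you make $f^2$ large, but large alone does not suppress the competing small vectors; the arithmetic of convergents is the missing mechanism. By contrast, the step you single out as ``delicate''---matching the monodromy orbit---is routine in the paper via Eichler's criterion (Lemma~\ref{lemm:orbitlist}), since the orbit is already pinned down by $(\varrho,\varrho)$ and the image of $\varrho/\dv(\varrho)$ in the discriminant group.
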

In particular, the cone of effective curves of $S^{[n]}$ is generated by 
$\delta^{\vee}$ and $[\bP^1]$.

We will develop several lemmas to prove this theorem.
A direct computation (cf.~\cite[Prop.~12.6]{BM2}) gives:
\begin{lemm} \label{lemm:bound}
Retain the notation introduced above and assume that 
$$
\left(v,v\right)=2(n-1), \quad
|\left(a,v\right)|\leq v^2/2, \,\,\text{  and } \, a^2 \ge -2.
$$  
Then we have
$$\left(\rho,\rho\right) > -2(n-1)^2(n+3), \quad
\left(R,R\right)>-(n+3)/2.$$
\end{lemm}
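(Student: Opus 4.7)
The lemma is essentially a direct computation starting from the defining equation $(v,v)a - (a,v)v = M\varrho$. My first step would be to pair both sides with themselves under the Beauville-Bogomolov form; the cross term partially cancels, giving the master identity
\[M^2(\varrho,\varrho) = (v,v)^2(a,a) - 2(v,v)(a,v)^2 + (v,v)(a,v)^2 = (v,v)\bigl[(v,v)(a,a) - (a,v)^2\bigr].\]
Both bounds in the lemma will follow by estimating the bracket and dividing by appropriate squares.

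\textbf{Bounding $(\varrho,\varrho)$.} Using the three hypotheses $(a,a) \ge -2$, $|(a,v)| \le v^2/2 = n-1$, and $(v,v) = 2(n-1)$, I bound the bracket below:
\[(v,v)(a,a) - (a,v)^2 \ge 2(n-1)(-2) - (n-1)^2 = -(n-1)(n+3).\]
Multiplying by $(v,v) = 2(n-1)$ and using $M^2 \ge 1$ yields $(\varrho,\varrho) \ge -2(n-1)^2(n+3)$. This takes care of the first estimate.

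\textbf{Passing to $R$.} For the bound on $(R,R) = (\varrho,\varrho)/\dv(\varrho)^2$, I need to compute $\dv(\varrho)$. The key observation is that $\dv(\varrho)$ is taken against $H^2(X,\bZ) = v^\perp$, so for $y \in v^\perp$ the term $(a,v)(v,y)$ drops out:
\[M(\varrho, y) = (v,v)(a, y) - (a,v)(v, y) = (v,v)(a,y).\]
Setting $d := \gcd\{(a,y) : y \in v^\perp\}$, this gives the identity $M\dv(\varrho) = (v,v)\,d$. Substituting $M^2\dv(\varrho)^2 = (v,v)^2 d^2$ into the master formula,
\[(R,R) = \frac{(v,v)\bigl[(v,v)(a,a) - (a,v)^2\bigr]}{(v,v)^2 d^2} = \frac{(v,v)(a,a) - (a,v)^2}{(v,v)\,d^2} \ge \frac{-(n-1)(n+3)}{2(n-1)} = -\frac{n+3}{2},\]
where the final step uses $d \ge 1$ together with the bound already established on the numerator.

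\textbf{Main obstacle.} The numerical estimates are elementary once the master formula is in hand; the only subtle step is the divisibility identity $M\dv(\varrho) = (v,v)d$, and the subtlety is really just recognizing that $\dv$ is measured against $v^\perp$ rather than $\wLambda$, which is precisely what makes the $(a,v)(v,y)$ term vanish. Both bounds are saturated exactly when $(a,a) = -2$, $|(a,v)| = n-1$, $M=1$, and $d=1$ hold simultaneously---the Lagrangian $\bP^n$ configuration where $\left(R,R\right) = -(n+3)/2$---so the inequalities are sharp.
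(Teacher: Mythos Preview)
Your argument is correct and carries out exactly the direct computation the paper alludes to (the paper gives no proof beyond citing \cite[Prop.~12.6]{BM2}). The master identity $M^2(\varrho,\varrho)=(v,v)\bigl[(v,v)(a,a)-(a,v)^2\bigr]$ and the divisibility relation $M\,\dv(\varrho)=(v,v)d$ are precisely the two ingredients needed, and you handle both cleanly.

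One remark on strictness: what you actually prove is the non-strict inequality $(\varrho,\varrho)\ge -2(n-1)^2(n+3)$ and $(R,R)\ge -(n+3)/2$, and as you observe, equality is attained in the Lagrangian $\bP^n$ configuration (indeed the paper itself records $(\ell,\ell)=-(n+3)/2$ in the section on Lagrangian projective spaces). So the strict inequalities in the lemma statement are a minor slip; the paper's own application of the lemma in the proof of Theorem~\ref{theo:existray} uses the non-strict form $C=-2(n+3)(n-1)^2\le \varrho_0^2$, consistent with what you establish.
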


Markman \cite[Lemma~9.2]{MarkSurv}
shows that the image $G_n$ of the monodromy representation consists of the orientation-preserving
automorphisms of the lattice $H^2(X,\bZ)$ acting via $\pm 1$ on $d(H^2(X,\bZ))$.
In particular, $d(H^2(X,\bZ))$ has a distinguished generator 
$\pm \delta^{\vee}$, determined
up to sign.  
We consider orbits of primitive vectors $\varrho \in H^2(X,\bZ)=v^{\perp}$ under
the action of automorphisms of $H^2(X,\bZ)$ acting trivially on $d(H^2(X,\bZ))$.  
A classical result of Eichler \cite{Eichler} (see also \cite[Lemma 3.5]{GHS})
shows that there is a unique orbit of primitive elements $\varrho'\in H^2(X,\bZ)$ such that
\begin{equation} \label{eqn:varrho}
\left(\varrho',\varrho'\right)=\left(\varrho,\varrho\right), \quad
\varrho'/\dv(\varrho')=\varrho/\dv(\varrho) \in d(H^2(X,\bZ)).
\end{equation}
The same holds true even if we restrict to the subgroup preserving orientations.
Let $G_n^+ \subset G_n$ denotes the orientation preserving elements;
for this group, the second part of (\ref{eqn:varrho}) may be relaxed to
$$\varrho'/\dv(\varrho')=\pm \varrho/\dv(\varrho) \in d(H^2(X,\bZ)).$$

\begin{lemm}  \label{lemm:orbitlist}
Each $G^+_n$-orbit of primitive vectors in $H^2(X,\bZ)$ has a representative
of the form
\begin{equation} \label{eqn:standard}
\varrho=sf-t\delta, \quad \gcd(s,t)=1, \ s,t>0, \ s|2(n-1),
\end{equation}
where $f \in H^2(S,\bZ)$ is primitive with $f^2=2d>0$.
Here $\dv(\varrho)=s$, $R=\varrho/s$, and $[R]=-2t(n-1)/s \in d(H^2(X,\bZ))$.  
\end{lemm}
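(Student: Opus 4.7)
The plan is to apply Eichler's criterion to reduce the statement to an explicit construction. By the Eichler criterion quoted just before the lemma, combined with the description of the monodromy image from \cite{MarkSurv}, each $G_n^+$-orbit of primitive vectors $\varrho \in H^2(X,\bZ) = \bZ\delta \oplus H^2(S,\bZ)$ is determined by the pair $\bigl((\varrho,\varrho),\; \pm\varrho/\dv(\varrho)\bigr)$, where the second component lives in $d(H^2(X,\bZ)) = (\bZ/2(n-1)\bZ)\delta^{\vee}$. So given arbitrary invariants $N$ and $k\delta^{\vee}$ with $k \in \bZ/2(n-1)\bZ$, it suffices to exhibit one representative $\varrho$ of the form $sf - t\delta$ realizing them.

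First I would read off $s$ from the order of $k$ in $\bZ/2(n-1)\bZ$, which is automatically a divisor of $2(n-1)$. Writing $k = m \cdot 2(n-1)/s$ with $\gcd(m,s) = 1$, I would choose $t > 0$ with $t \equiv -m \pmod{s}$, which forces $\gcd(s,t) = 1$. The self-intersection identity $N = 2ds^2 - 2t^2(n-1)$ then dictates
\[
d = \frac{N/2 + t^2(n-1)}{s^{2}}.
\]
Granting $d$ is a positive integer, I would take any primitive $f \in H^2(S,\bZ)$ with $f^2 = 2d$ (available since the K3 lattice $U^3 \oplus (-E_8)^2$ primitively represents every positive even integer), and set $\varrho = sf - t\delta$.

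The one non-formal step, and the main obstacle I would flag, is the divisibility $s^2 \mid N/2 + t^2(n-1)$. Here I would invoke the compatibility between the Beauville--Bogomolov form and the discriminant form on $d(H^2(X,\bZ))$: reducing $(R,R) = N/s^2$ modulo $\bZ$ must give $q(k\delta^{\vee}) = -k^2/(2(n-1)) = -2m^2(n-1)/s^{2}$, hence $s^2 \mid N + 2m^2(n-1)$. Writing $t = -m + s\ell$ and $2(n-1) = sq$, the discrepancy
\[
\bigl(N + 2t^2(n-1)\bigr) - \bigl(N + 2m^2(n-1)\bigr) = 2(n-1)(t-m)(t+m) = s^{2}\, q\,\ell\,(t-m)
\]
is visibly divisible by $s^2$, finishing the divisibility check. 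Positivity of $d$ is then arranged by taking $t$ sufficiently large in its residue class modulo $s$.

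The remaining verifications are bookkeeping: primitivity of $\varrho$ follows from $\gcd(s,t) = 1$; the equality $\dv(\varrho) = s$ from unimodularity of $H^2(S,\bZ)$ together with $s \mid 2(n-1)$; and the claim $[R] = -2t(n-1)/s \cdot \delta^{\vee}$ from the identification $\delta \equiv 2(n-1)\delta^{\vee}$ modulo $H^2(X,\bZ)$. Eichler's theorem then places this explicit $\varrho$ in the prescribed orbit, completing the argument.
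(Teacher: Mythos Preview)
Your argument is correct and is exactly the route the paper points to: the paper's entire proof is a one-line citation of Eichler's criterion via \cite[Prop.~3.6]{GHS}, and you have written out the construction that reference encodes. One small repair worth noting: in the divisibility step you reduce $(R,R)$ modulo $\bZ$, but since $\Lambda_n$ is even the discriminant form is $\bQ/2\bZ$-valued, giving the stronger statement $2s^2 \mid N + 2m^2(n-1)$; together with the observation that $q\ell(t-m) = qs\ell^{2} - 2qm\ell = 2(n-1)\ell^{2} - 2qm\ell$ is even, this yields $2s^2 \mid N + 2t^2(n-1)$ and hence $d \in \bZ$, which is what the construction of $f$ actually requires.
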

This is quite standard---see the first paragraph of the proof of
\cite[Prop.~3.6]{GHS} for the argument via Eichler's criterion.

\begin{lemm} \label{lemm:produce}
Fix a constant $C$ and the orbit of a primitive vector $\varrho \in H^2(S^{[n]},\bZ)$ with 
$C \le  \varrho^2 < 0$ and $\dv(\varrho)=s$.  
Then there exists an even integer $2d>0$ and a 
representation \eqref{eqn:standard}
such that for every 
$$\varrho_0=\sigma f -\tau \delta, \ \sigma,\tau>0$$
with $C\le \varrho_0^2<0$ we have
$t/s > \tau/\sigma$.
\end{lemm}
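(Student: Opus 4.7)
The plan is to parametrize representations $\varrho = sf - t\delta$ of the given orbit using Lemma~\ref{lemm:orbitlist} and to exploit the freedom in $t$ (equivalently in $d = f^2/2$) so that, for $t$ sufficiently large in the admissible arithmetic progression, the slope $t/s$ dominates every competitor.

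First I would reduce the problem to a finite range of $\sigma$. Since $f$ and $\delta$ are orthogonal with $f^2 = 2d$ and $\delta^2 = -2(n-1)$, one has $\varrho_0^2 = 2d\sigma^2 - 2(n-1)\tau^2$ and
$$\left(\tfrac{t}{s}\right)^{\!2} - \left(\tfrac{\tau}{\sigma}\right)^{\!2} = \frac{|\varrho^2|/s^2 - |\varrho_0^2|/\sigma^2}{2(n-1)}.$$
Hence $t/s > \tau/\sigma$ is equivalent to $\sigma^2 |\varrho^2| > s^2 |\varrho_0^2|$, and because $|\varrho_0^2| \le |C|$ this holds automatically once $\sigma > \sigma_0 := s\sqrt{|C|/|\varrho^2|}$. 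Only $\sigma \in \{1,\ldots,\lfloor\sigma_0\rfloor\}$ need be considered.

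For each such small $\sigma$, the bad $\tau$'s (those giving $\tau/\sigma \ge t/s$ while still satisfying $\varrho_0^2 \ge C$) lie in
$$\tau^2 \in \left[\frac{\sigma^2 t^2}{s^2},\ \frac{\sigma^2 t^2}{s^2} + \Delta_\sigma\right], \quad \Delta_\sigma := \frac{|C| - \sigma^2 |\varrho^2|/s^2}{2(n-1)} > 0,$$
an interval around $\sigma t/s$ of length $O(\Delta_\sigma/t)$ in $\tau$. I would then split on divisibility. If $s \nmid \sigma$, then $\gcd(s,t)=1$ forces $\sigma t \not\equiv 0 \pmod{s}$, so $\sigma t/s$ lies at distance at least $1/s$ from every integer, and the window is integer-free as soon as $t > \Delta_\sigma s^2/(2\sigma)$. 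If $\sigma = js$, the only integer candidate is $\tau = jt$, producing the non-primitive multiple $\varrho_0 = j\varrho$; this is excluded under the natural primitivity convention (slopes are scale-invariant, so only primitive competitors should be compared), and in any case $j \ge 2$ is ruled out by $\varrho_0^2 < C$ whenever $j^2|\varrho^2|>|C|$.

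Finally I would confirm that admissible $t$'s form an infinite arithmetic progression: starting from any valid $(d_0, t_0)$ provided by Lemma~\ref{lemm:orbitlist}, setting $t = t_0 + ks$ for $k \in \bZ_{\ge 0}$ yields $d = d_0 + (2(n-1)/s)\,k t_0 + (n-1)k^2$, which is a positive integer thanks to $s \mid 2(n-1)$, while preserving $\gcd(s,t)=1$ and the residue $t \bmod s$ that encodes the image in the discriminant group. Choosing $k$ large enough to exceed $\max_\sigma \Delta_\sigma s^2/(2\sigma)$ then completes the proof. The main obstacle is the case analysis at $s \mid \sigma$, where one must invoke the primitivity convention; the remainder is elementary asymptotic bookkeeping once the slope identity above is in hand.
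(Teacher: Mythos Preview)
Your argument is correct and takes a genuinely different route from the paper's. The paper observes that the constraint $C\le\varrho_0^2<0$ forces $|\sqrt{d/(n-1)}-\tau/\sigma|<1/(2\sigma^2)$, invokes the classical theorem that such good approximants must be continued-fraction convergents of $\sqrt{d/(n-1)}$, and then arranges (by taking $a_0$ large) that $t/s$ is the \emph{first} odd convergent satisfying the bound, so every competitor is a later odd convergent and hence has smaller slope. Your approach bypasses continued fractions entirely: the slope identity reduces the problem to finitely many values of $\sigma$, and for each of these the window of bad $\tau$'s has width $O(1/t)$ while its left endpoint $\sigma t/s$ stays at distance $\ge 1/s$ from the integers whenever $s\nmid\sigma$. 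This is more elementary and arguably more transparent; the paper's route, on the other hand, explains \emph{why} the competitors are so constrained (they are all convergents of a single quadratic irrational).

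One small point: in the case $\sigma=js$ you correctly identify $\tau=jt$ as the only integer in the window for $t$ large, but your handling of $j=1$ is implicit. The vector $\varrho_0=\varrho$ is primitive, so the ``primitivity convention'' does not exclude it; what excludes it is simply that the lemma is tacitly about competitors distinct from $\varrho$ (as is clear from its use in the proof of Theorem~\ref{theo:existray}). The paper's proof has exactly the same lacuna: it asserts $r'>r$ while the estimate only gives $r'\ge r$, with $r'=r$ corresponding to $\varrho_0\propto\varrho$. So this is an ambiguity in the statement rather than a defect in either argument.
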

\begin{proof}
First, let $\mu<s$ be a positive integer such that $t+\mu$ is divisible by $s$.
If we express $t/s$ as a continued fraction
$$t/s=[a_0,a_1,\ldots,a_r]$$
then $[a_1,\ldots,a_r]$ depends only on $\mu/s$ and $a_0=\lfloor t/s \rfloor$.
We regard $a_1,\ldots,a_r$ as fixed and $a_0$ as varying.

If the representation \eqref{eqn:standard} is to hold we must have
$$\varrho^2= 2ds^2-2(n-1)t^2$$
which implies
\begin{eqnarray*}
d&=&(n-1)\left(\frac{t}{s}\right)^2 + \frac{\varrho^2}{2s^2} \\
 &=& \frac{2(n-1)t^2+\varrho^2}{2s^2}.
\end{eqnarray*}
If the fraction is an integer for some $t$ it is an 
integer for an arithmetic sequence of $t$'s.  
Thus there are solutions for $t\gg 0$, and we may assume $d$ is large.  

Now suppose that $\tau_j/\sigma_j=[a_0,\ldots,a_j]$ for some $j<r$.  We estimate
$$
\frac{d}{n-1}-\left(\frac{\tau_j}{\sigma_j}\right)^2 
=\left(\frac{t}{s}\right)^2+\frac{\varrho^2}{2(n-1)s^2}-\left(\frac{\tau_j}{\sigma_j}\right)^2 $$
using the continued fraction expressions.  Substituting yields
$$
(a_0+\frac{1}{[a_1,\ldots,a_r]})^2 +\frac{\varrho^2}{2(n-1)s^2}-(a_0+\frac{1}{[a_1,\ldots,a_j]})^2$$
and cancelling the $a_0^2$ terms gives
$$ 2a_0 (\frac{1}{[a_1,\ldots,a_r]}-\frac{1}{[a_1,\ldots,a_j]})+
(\frac{1}{[a_1,\ldots,a_r]}-\frac{1}{[a_1,\ldots,a_j]})^2+\frac{\varrho^2}{2(n-1)s^2}.
$$
This can be made arbitrarily large in absolute value if $a_0\gg 0$.  
Therefore, for $j<r$ we conclude
$$2d \sigma_j^2 -2(n-1)\tau_j^2 \not \in [C,0).$$

Suppose we have $\sigma$ and $\tau$ as specified above in the assumption of the Lemma.
It follows that 
$$\frac{C}{2(n-1)\sigma^2} < \frac{d}{n-1}-\left(\frac{\tau}{\sigma}\right)^2 < 0;$$
dividing both sides by $\sqrt{d/(n-1)}+\frac{\tau}{\sigma}$, which we may assume is 
larger than $\frac{|C|}{(n-1)}$, we obtain
$$\frac{1}{2\sigma^2}>|\sqrt{d/(n-1)} -\frac{\tau}{\sigma}|.$$
It follows (see \cite[Thm.~184]{HW}, for example) that $\tau/\sigma$ is necessarily
a continued fraction approximation for $\sqrt{d/(n-1)}$, say, $\tau_{r'}/\sigma_{r'}$.  
Given a representation \eqref{eqn:standard} we may assume
that $t/s$ is a continued fraction approximation as well.

Let $\tau_j/\sigma_j=[a_0,\ldots,a_j]$ denote the sequence of continued fraction approximations of
$\sqrt{d/(n-1)}$, starting from
$$\tau_0=\lfloor \sqrt{d/(n-1)}\rfloor, \sigma_0=1.$$ 
Note that 
$$\tau_{2w-2}/\sigma_{2w-2}<\tau_{2w}/\sigma_{2w}<\sqrt{d/(n-1)}<\tau_{2w+1}/\sigma_{2w+1}<\tau_{2w-1}/\sigma_{2w-1}$$
for each $w\in \bN$, thus
$$2d\sigma_j^2-2(n-1)\tau_j^2<0$$
precisely when $j$ is odd.  
Our estimate above shows that $r'>r$ whence
$$\tau/\sigma=\tau_{r'}/\sigma_{r'}<\tau_r/\sigma_r=t/s,$$
which is what we seek to prove.  \end{proof}

\begin{proof}We complete the proof of Theorem~\ref{theo:existray}.  

Lemma~\ref{lemm:bound} shows that each $\varrho_0 \in H^2(S^{[n]},\bZ)$ associated with
a negative extremal rays satisfies 
$$ C=-2(n+3)(n-1)^2 \le \varrho_0^2 <0.$$
Lemma~\ref{lemm:orbitlist} allows us to assume $\varrho_0$ is equivalent under the monodromy
action to one of the lattice vectors
satisfying the hypotheses of Lemma~\ref{lemm:produce}.

Take $S$ to be a K3 surface with $\Pic(S)=\bZ f$ and $f^2=2d$; thus we have
$\Pic(S^{[n]})=\bZ f \oplus \bZ \delta$.   The cone of effective
curves of $S^{[n]}$ has two generators, one necessarily $\delta^{\vee}$.
We choose $d$ via Lemma~\ref{lemm:produce}.  
We know from \cite[Thm.~12.2]{BM2} that the generator of $\bQ_{\ge 0}\varrho
\cap H_2(S^{[n]},\bZ)$, is effective with some multiple generated
by a rational curve $\bP^1 \subset S^{[n]}$.  However, Lemma~\ref{lemm:produce} ensures that
all the other $\varrho_0 \in \Pic(S^{[n]})$ satisfying 
$$C \le \varrho_0^2 <0, \quad \left(\varrho_0,f\right)>0,$$
are contained in the cone spanned by $\delta^{\vee}$ and $\varrho$.  
Thus our rational curve is necessarily extremal.  
\end{proof}

\section{Automorphisms on Hilbert schemes not coming from K3 surfaces}
\label{sect:auto}

Oguiso-Sarti asked whether $S^{[n]}, n\ge 3$ can admit automorphisms not 
arising from automorphisms of $S$.  
Beauville give examples for $n=2$, e.g., the secant line involution for 
generic quartic surfaces; recently, a systematic analysis has been offered in \cite{BCNWS}.
A related question of Oguiso is to exhibit automorphisms of $S^{[n]}$ to arising
from automorphisms of any K3 surface $T$ with $T^{[n]}\simeq S^{[n]}$
(see Question 6.7 in his ICM talk \cite{oguiso}).

\begin{prop}
There exists a polarized K3 surface $(S,h)$ such that $S^{[3]}$
admits an automorphism $\alpha$ not arising from $S$.  Moreover, there exists 
no K3 surface $T$ with $T^{[3]}\simeq S^{[3]}$ explaining $\alpha$.
\end{prop}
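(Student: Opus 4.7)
The plan is to combine the Markman-Verbitsky Torelli theorem with the enumeration of extremal rays in Section~\ref{section:EOR} to construct an explicit automorphism of $X = S^{[3]}$ for a suitably chosen $(S,h)$, and then to verify the two required properties by direct computation in the Mukai lattice. I take $\operatorname{Pic}(S) = \bZ h$ with $h^2 = 2d$, so that $\operatorname{Pic}(X) = \bZ h \oplus \bZ \delta$ with $\delta^2 = -4$ and $(h,\delta) = 0$; write $v = (1,0,-2) \in \wLambda_{\mathrm{alg}}$ and $\delta = (1,0,2) \in v^{\perp}$.

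The key arithmetic step is to choose $d$ so that, besides the standard Hilbert-Chow class $a = (0,0,-1)$, there exists a second primitive isotropic vector $a' \in \wLambda_{\mathrm{alg}}$ with $(v,a') = 1$. Writing $a' = (r,h,s)$ reduces the condition to the Pell-type equation $r(2r-1) = d$; the smallest solution is $d = 1$, $a' = (1,h,1)$. By Theorem~\ref{theo:main} and the dimension-six table of Section~\ref{subsect:fourcase}, the nef cone of $X$ is then bounded by exactly two walls, both of Hilbert-Chow type, corresponding to $a$ and $a'$.

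The rank-two lattice $\operatorname{Pic}(X)$ admits an explicit order-two isometry $\sigma$ of determinant $-1$ swapping these walls; for $d=1$ one finds $\sigma(h) = 3h + 2\delta$, $\sigma(\delta) = -4h - 3\delta$. Extend $\sigma$ by the identity on the transcendental lattice to a Hodge isometry of $H^2(X,\bZ)$; verify directly that $\sigma$ preserves the orientation of the positive three-plane and acts trivially on $d(H^2(X,\bZ)) \simeq \bZ/4\bZ$, so that $\sigma \in G_3^+$. By the Torelli-type theorem quoted in Section~\ref{sect:generalities}, $\sigma$ lifts to a birational self-map of $X$, which is regular because $\sigma$ preserves the closed ample cone (permuting its two bounding walls); call this $\alpha$. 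That $\alpha$ does not arise from $S$ is immediate: any $\phi \in \Aut(S)$ acts on $\wLambda_S$ fixing the two rank-one summands, hence fixes $\delta = (1,0,2)$, while $\sigma(\delta) \neq \pm \delta$ by construction.

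The main obstacle is the second assertion. By Markman's Torelli statement, any K3 surface $T$ with $T^{[3]} \simeq X$ provides an identification under which the Hilbert-Chow class of $T^{[3]}$ pulls back to a primitive isotropic class $a_T \in \wLambda_{\mathrm{alg}}$ with $(v,a_T) = \pm 1$, and $T$ explains $\alpha$ precisely when $\alpha^{*}$ fixes such an $a_T$ up to sign. The strategy is to compute $\sigma$ on the full algebraic Mukai lattice $\wLambda_{\mathrm{alg}} = \bZ(1,0,0) \oplus \bZ h \oplus \bZ(0,0,1)$: for $d=1$ one finds $\sigma((1,0,0)) = (-1,-2h,-4)$ and $\sigma((0,0,1)) = (-1,-h,-1)$. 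The $+1$-eigenspace is $\{(r,kh,2k-2r) : r,k \in \bZ\}$ with self-pairing $2(k^2 - 2rk + 2r^2)$; this form has negative discriminant and vanishes only at $r=k=0$. The $-1$-eigenspace consists of spherical classes $\bZ \cdot (1,h,2)$. Thus no nonzero primitive isotropic class in $\wLambda_{\mathrm{alg}}$ is fixed by $\sigma$ up to sign, and $\alpha$ is not explained by any $T$. The main technical hurdle is this eigenspace analysis, together with the orientation and discriminant-group verifications needed to place $\sigma$ in $G_3^+$.
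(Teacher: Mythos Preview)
There is a genuine gap: for $d=1$ the nef cone of $X=S^{[3]}$ is \emph{not} bounded by two Hilbert--Chow walls. The algebraic Mukai lattice $\wLambda_{\mathrm{alg}}=\langle (1,0,0),h,(0,0,1)\rangle$ contains the spherical class
\[
a_0=(1,-h,2),\qquad (a_0,a_0)=h^2-4=-2,\qquad (v,a_0)=-2+2=0,
\]
which lies in the $(-2,0)$ row of the dimension-six table (a $\bP^1$-bundle over an $S^{[2]}$-type fourfold). Its image $\theta^\vee(\pm a_0)=\pm(h-\delta)$ gives the effective curve class $R_0=h-\delta$ with $R_0^2=-2$. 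One checks directly that $R_0$ is extremal: the Mori cone of $X$ is generated by $\delta^\vee$ and $R_0$, so that
\[
\Nef(X)=\bR_{\ge 0}\,h+\bR_{\ge 0}\,(2h-\delta).
\]
Your second Hilbert--Chow wall (from $a'=(1,h,1)$, with orthogonal ray $3h-2\delta$) lies strictly beyond $2h-\delta$ and does not bound $\Nef(X)$.

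Consequently your $\sigma$ does not preserve the ample cone: $\sigma(h)=3h+2\delta$ satisfies $(3h+2\delta)\cdot\delta^\vee=-2<0$, so $\sigma(h)$ is not even nef. In fact no lattice isometry can swap the two genuine walls $h$ and $2h-\delta$, since any such map would force $\tau(\delta)=3h-2\delta$, which has square $2\neq -4$. Thus $S^{[3]}$ for a degree-$2$ K3 admits no nontrivial automorphism at all, and the proposed $\alpha$ does not exist. The eigenspace analysis for the second assertion, while correct in spirit, is therefore moot.

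The paper circumvents this by working not with an involution that swaps walls but with the reflection $D\mapsto -D+(D,g)g$ in an \emph{ample} class $g$ of square $2$. Since such a reflection is a monodromy operator fixing an interior point of $\Nef(X)$, it automatically preserves $\Nef(X)$ and lifts to a biregular involution. The delicate point is producing a $d$ for which a square-$2$ class is genuinely ample; the paper's continued-fraction computation shows $d=114$ with $g=3h-16\delta$ works, while smaller candidates such as $d=6$ fail for exactly the reason your $d=1$ does---an unexpected extremal ray (there a Lagrangian $\bP^3$) obstructs ampleness.
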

For simplicity we will 
restrict to those with $\Pic(S)=\bZ h$ with $h^2=d$. 
We have
$$ 
\Pic(S^{[3]})=\bZ h \oplus \bZ\delta,
$$ 
with $2\delta$
the class of the non-reduced subschemes. Recall that 
$$
(h,h)=d, \quad        (h,\delta)=0,  \quad         (\delta,\delta)=-4.
$$

\begin{lemm}
Suppose there exists an element $g\in \Pic(S^{[3]})$ such that
$$
(g,g)=2 \quad \text{and}\quad g.R>0
$$
for each generator $R$ of the cone of effective curves
of $S^{[3]}$.  
Then $S^{[3]}$ 
admits an involution associated with reflection
in $g$:
$$
D \mapsto  -D + (D,g) g.
$$
\end{lemm}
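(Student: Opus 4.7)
The plan is to apply the global Torelli theorem for irreducible holomorphic symplectic manifolds (Verbitsky, in Markman's refined form) to the prescribed involution
$$
\sigma : H^2(S^{[3]}, \bZ) \to H^2(S^{[3]}, \bZ), \qquad \sigma(D) = -D + (D,g)\,g.
$$
Integrality of $\sigma$ is automatic since $(D,g) \in \bZ$, and the identities $\sigma^2 = \mathrm{id}$ and $(\sigma D, \sigma D) = (D,D)$ follow from short direct computations that use $(g,g)=2$ in an essential way. Three further properties must then be verified: $\sigma$ is a Hodge isometry, $\sigma$ lies in the monodromy group $G_3^+$, and $\sigma$ preserves the ample cone of $S^{[3]}$.

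The Hodge isometry property is immediate because $g \in \Pic(S^{[3]})$ is of type $(1,1)$: any $(2,0)$-class $D$ satisfies $(D,g)=0$, so $\sigma D = -D$ remains of type $(2,0)$, and the full Hodge decomposition is preserved. For membership in $G_3^+$ I would use the characterization recalled just before Lemma~\ref{lemm:orbitlist}. The discriminant action is computed from the identity $\sigma(D) + D = (D,g)\,g \in \bZ g \subset H^2(X,\bZ)$, which shows $\sigma$ acts as $-1$ on $d(H^2(S^{[3]},\bZ))$. For the orientation check, restrict $\sigma$ to the positive three-plane $\mathrm{Re}(H^{2,0}) \oplus \mathrm{Im}(H^{2,0}) \oplus \bR g$: it acts there with eigenvalues $-1,-1,+1$ and hence determinant $+1$.

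For the ample-cone condition, the hypothesis $g\cdot R>0$ for every generator $R$ of the Mori cone says precisely that $g$ lies in the interior of the nef cone, hence is ample. Since $\sigma \in G_3^+$, it permutes the chambers of the Bayer-Macrì wall-and-chamber decomposition of the positive cone; the ample cone of $S^{[3]}$ is one such chamber by Theorem~\ref{theo:main}. Since $\sigma(g)=g$ is an ample class lying both in the ample chamber and in its $\sigma$-image, these two chambers must coincide. Global Torelli then produces a unique holomorphic automorphism of $S^{[3]}$ inducing $\sigma$ on cohomology, and $\sigma^2 = \mathrm{id}$ forces it to be an involution.

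The main conceptual step is the chamber argument: the verifications of the Hodge, lattice, and discriminant/orientation properties are short, but to pass from $\sigma(g)=g$ to preservation of the full ample chamber one has to invoke the fact that the monodromy group respects the wall-and-chamber structure of Theorem~\ref{theo:main}. The arithmetic accident $(g,g)=2$ is used throughout: it is exactly what makes the formula $D\mapsto -D+(D,g)g$ an isometry of the lattice, so the construction does not extend to classes of other square.
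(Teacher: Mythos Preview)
Your proof is correct and follows exactly the approach the paper takes: the paper's proof is the single sentence ``This follows from the Torelli Theorem, as the reflection is a monodromy operator in the sense of Markman,'' and you have simply supplied the verifications (Hodge isometry, action by $-1$ on the discriminant, orientation preservation, and preservation of the ample chamber via $\sigma(g)=g$) that this sentence presupposes. There is nothing to add or correct.
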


\begin{proof}
This follows from the Torelli Theorem, as the reflection is a monodromy operator in the sense of Markman.
\end{proof}

\begin{exam}
Let $d=6$.  Given three points on a degree six K3 surface, the plane they
span meets the K3 surface three additional points, yielding an involution
$S^{[3]} \dashrightarrow S^{[3]}$.

However, this breaks down along triples of collinear points, which are
generally parametrized by maximal isotropic subspaces of the (unique smooth)
quadric hypersurface containing X.  These are parametrized by a $\bP^3 \subset S^{[3]}$.
Here we have $g=h-\delta$ and the offending $R$ 
is Poincar\'e dual to a multiple of $2h-3\delta$.  
The class of the line in $\bP^3$ is $h-(3/2)\delta$,
interpreting $H_2(S^{[3]},\bZ)$ as a finite extension of $H^2(S^{[3]},\bZ)$.

Returning to arbitrary $d$, we apply the ampleness criterion to find the extremal curves.  
One is proportional to $\delta$.  The second generator is given by
$R=ah-b\delta$, with $(a,b)$ non-negative relatively prime integers 
satisfying one of the following:
\begin{enumerate}
\item   $da^2-4b^2=-2$
\item   $da^2-4b^2=-4$, with $a$ divisible by 4
\item   $da^2-4b^2=-4$, with $a$ divisible by 2 but not 4
\item   $da^2-4b^2=-12$, with $a$ divisible by 2 but not 4
\item   $da^2-4b^2=-36$, with $a$ divisible by 4
\end{enumerate}
The smallest example is $d=114$ and $g=3h-16\delta$.  To check
the ampleness criterion, the first step is to write down all the 
$(a,b)$ where $114a^2-4b^2$ is
`small' using the continued fraction expansion
$$
        \sqrt{114}/2=[5;2,1,20,1,2,10],
$$
which gives the following
$$
\begin{array}{r|r|r}
        a    &   b   &    114a^2-4b^2 \\
\hline
        1    &   5   &       14 \\
        2    &   11  &     -28 \\
        3    &   16  &     2 \\
        62   &    331 &     -28 \\ 
        65   &    347 &     14 \\
        127  &    678 &     -30 \\
        192  &    1025 &    -4 
\end{array}
$$
The class $R=192h-1025\delta$ is the second extremal generator; note it
satisfies $R.g=64>0$ which means that $g$ is ample on $S^{[3]}$.

Now $-36$ is not `small' for $114a^2-4b^2$ so we need to analyze this case
separately.  However, the equation
$$
        114a^2-4b^2=-36
$$
only has solutions when $a$ and $b$ are both divisible by three.
\end{exam}

\section{Ambiguity in the ample cone}
\label{sect:ambiguity}
The following addresses a question raised by Huybrechts:
\begin{theo}
There exist polarized manifolds of K3 type $(X,g)$ and $(Y,h)$ admitting
an isomorphism of Hodge structures
$$
\phi:H^2(X,\bZ) \ra H^2(Y,\bZ), \quad \phi(g)=h
$$
not preserving ample cones.
\end{theo}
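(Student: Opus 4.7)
The strategy exploits the fact, manifest in Theorem~\ref{theo:main}, that the Mori cone of a polarized K3-type variety is computed in terms of the extended Mukai lattice $\wLambda\supset H^2(X,\bZ)$: the condition $|(a,v)|\le v^2/2$ on classes $a\in\wLambda_{alg}$ depends on the embedding of $H^2(X,\bZ)$ into $\wLambda$, not merely on the polarized Hodge structure on $H^2(X,\bZ)$. The plan is therefore to exhibit two polarized K3-type varieties whose $H^2$'s agree via some Hodge isometry $\phi$ carrying $g$ to $h$, but whose extensions to $\wLambda$ are inequivalent, and then to show that the resulting Mori, hence ample, cones differ.

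The first step is to construct an abstract polarized Hodge structure $(L,g)$ of K3 type admitting at least two inequivalent primitive embeddings $L\hookrightarrow\wLambda\cong U^4\oplus(-E_8)^2$ whose orthogonal complement is in each case generated by a primitive vector of square $2n-2$. By Nikulin's theory of discriminant forms, such embeddings are classified by orbits of gluing elements in $d(L)$; for Picard lattices with a rich enough discriminant form, multiple orbits exist, and Markman's birational Torelli theorem then forces the two corresponding K3-type varieties to be non-birational. I would realize the first embedding as $(X,g)=(S^{[n]},g)$ for a suitable K3 surface $S$ and the second as a moduli space $(Y,h)=(M_v(T),h)$ of sheaves on a different K3 surface $T$, choosing $S$, $T$, $v$, and the polarizations so that $H^2(X,\bZ)$ and $H^2(Y,\bZ)$ match as polarized Hodge structures via the tautological identification with $L$.

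The final step is to verify that the ample cones really differ. Applying Theorem~\ref{theo:main} (in the formulation~(\ref{eqn:alt})) to each side, one enumerates the classes $a\in\wLambda_{alg}$ with $a^2\ge-2$ and $0\le(a,v)\le v^2/2$ and projects via $\theta^{\vee}$ to obtain the generators of the Mori cone. Because the two embeddings place $v$ differently in $\wLambda_{alg}$, both the integrality of the candidate $a$ and the Mukai pairings $(a,v)$ change, so the lists of extremal rays produced for $X$ and $Y$ will not coincide. Dualizing through the Beauville--Bogomolov form then exhibits a wall of $\Nef(X)$ absent from $\Nef(Y)$, proving that $\phi$ cannot send $\mathrm{Amp}(X)$ onto $\mathrm{Amp}(Y)$. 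I expect the hardest step to be producing an explicit $L$ for which the two embeddings yield genuinely different extremal-ray enumerations---rather than enumerations that happen to be identified by some automorphism of $L$ fixing $g$---so that the distinction survives to the level of chambers of the positive cone. The tables of Section~\ref{section:EOR} and the discriminant data of Section~\ref{subsect:twelve} should supply viable candidates.
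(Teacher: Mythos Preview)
Your strategy is correct and is essentially the paper's own approach: the paper phrases the construction dually, as an automorphism $\alpha\in\Aut(\Lambda_n)$ acting on $d(\Lambda_n)$ by an element $\neq\pm1$ (equivalently, producing a second inequivalent embedding of $\Lambda_n$ into $\wLambda$), and then verifies explicitly for $n=7$, $\bar\alpha=5$, that $\alpha$ carries an extremal ray with lattice $\cH_1$ of discriminant $-49$ to a lattice $\cH_2$ not appearing in the table of Section~\ref{subsect:twelve}. Your anticipated ``hardest step'' is exactly where the paper does the concrete work, and your pointer to the discriminant tables is on target.
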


This contradicts our speculation that the Hodge structure determines the ample cone of a polarized holomorphic symplectic manifold;
we also need to keep track of the Markman extension data.

We first explain the idea: Let $\Lambda_n$ denote the lattice isomorphic to $H^2(X,\bZ)$
where $X$ is deformation equivalent to $S^{[n]}$ where $S$ is a K3 surface.
Given an isomorphism $X\simeq S^{[n]}$ we have a natural embedding $\Lambda_n \hookrightarrow \wLambda$.
Let $d(\Lambda_n)$ denote the discriminant group with the associated $(\bQ/2\bZ)$-valued
quadratic form. There is a natural homomorphism
$$\Aut(\Lambda_n) \ra \Aut(d(\Lambda_n))$$
which is surjective by Nikulin's theory of lattices.
The automorphisms of $\Lambda_n$ extending to automorphisms of $\wLambda$ are those
acting via $\pm 1$ on $d(\Lambda_n)$ \cite[\S 9]{MarkSurv}.

We choose $n$ such that $\Aut(d(\Lambda_n)) \supsetneq \{ \pm 1\}$,
exhibit an $\alpha \in \Aut(\Lambda_n)$ not mapping to $\pm 1$,
and show that $\alpha$ fails to preserve the ample cone by
verifying that its dual
$$\alpha^*:\Lambda^*_n \ra \Lambda^*_n$$
fails to preserve the extremal rays identified by Bayer-Macr\`i.

We start by fixing notation: Consider
$$\Lambda_n \subset \wLambda \simeq U \oplus H^2(S,\bZ) \simeq U^4 \oplus (-E_8)^2$$
realized as the orthogonal complement of a vector $v \in U\simeq H^2(S,\bZ)^{\perp}$.
Let $e_1,f_1$ denote a basis for this $U$ satisfying
$$\left(e_1,e_1\right)=\left(f_1,f_1\right)=0, \left(e_1,f_1\right)=1;$$
let $e_2,f_2$ denote a basis for one of the hyperbolic summands
$U\subset H^2(S,\bZ)$. We may assume $v=e_1+nf_1$ and write 
$\delta=e_1-nf_1$.
Since $\Lambda_n^*\simeq H_2(X,\bZ)$ the classification of extremal
rays is expressed via monodromy orbits of vectors $R\in H_2(X,\bZ)$.
The pre-image of $\bZ R$ in $\wLambda$ is a rank-two lattice
$$ \cH \subset \wLambda, \quad   v,a \in \wLambda,$$
where $a$ is as described in Theorem~\ref{theo:main}.

The first step is to give an $n$ such that the group
$$(\bZ/2(n-1)\bZ)^*$$
admits an element $\bar{\alpha}\neq \pm 1$ such that
$$\bar{\alpha}^2 \equiv 1 \pmod{4(n-1)}.$$
We choose $n=7$ and $\bar{\alpha}=5$.

Next, we exhibit an $\alpha \in \Aut(\Lambda_7)$ mapping to 
$\bar{\alpha}$.
These exist by Nikulin's general theory, but we
offer a concrete example of such an automorphism.
Then we may take
$$\alpha(\delta)=5\delta+12(e_2+f_2), \ \alpha(e_2)=\delta+2e_2+3f_2,\ \alpha(f_2)=\delta+3e_2+2f_2$$
and acting as the identity on the other summands. 

The third step is to find an extremal ray that fails to be sent to an
extremal ray under $\alpha^*$. We are free to pick any representative
in the orbit under the monodromy. Consider then the
lattice
$$\cH_1:=\begin{array}{c|cc}
		& v & a \\
\hline
v & 12 & 5 \\
a & 5 & -2 
\end{array}
$$
with 
$$a=5f_1+e_2-f_2.$$
Consider the element 
$$a'=v-5a \in \Lambda_7;$$
the relevant ray $R$ is a generator of
$$\bQ a' \cap \Lambda_7^* \subset \Lambda_7 \otimes \bQ.$$
Explicitly
$$a'=5\delta - 12 e_2 + 12f_2$$
and 
$$\alpha(a')=25\delta + 72 e_2+48 f_2=25e_1-150f_1 + 72e_2+48f_2.$$
Let $\cH_2$ denote the saturated lattice containing $\alpha(a')$ and $v$.
Note that 
$\alpha(a')-v$ is divisible by $12$; write
$$b=\frac{\alpha(a')-v}{12}=2e_1-13f_1+6e_2+4f_2.$$
In particular, $\left<v,b\right> \subset \wLambda$ is saturated.
Thus we find:
$$\cH_2= \begin{array}{c|cc}
		& v & b \\
\hline 
v & 12 & -1 \\
b & -1 & -4
\end{array}
$$

We put $\cH_1$ and $\cH_2$ in reduced form: 
$$\cH_1 \simeq \left( \begin{matrix} 0 & 7 \\ 7 & -2 \end{matrix} \right), \quad
\cH_2 \simeq \left( \begin{matrix} 0 & 7 \\ 7 & -4 \end{matrix} \right)$$
which are {\em inequivalent} lattices of discriminant $-49$.
We refer the reader to the Section~\ref{subsect:twelve}: there is a 
{\em unique} lattice that appears of discriminant $-49$, i.e.,
the one associated with $\cH_1$. Thus $\cH_2$ is not associated with an
extremal ray $R'$. 

To recapitulate: Suppose we started with an $X$ such that the 
vector $a$ yields an extremal ray $R$. We apply the automorphism 
$\alpha$ to $H^2(X,\bZ)$ to get a new Hodge structure, equipped
with an embedding into $\wLambda$;
surjectivity of Torelli \cite{HuyBasic} guarantees the
existence of another hyperk\"ahler manifold $Y$ with this
Hodge structure and a compatible embedding $H^2(Y,\bZ) \subset \wLambda$.
However, the class $R'\in H^2(Y,\bZ)$ corresponding to $R$ is
{\em not} in the monodromy orbit of any extremal ray.  

\

To exhibit a concrete projective example of this type, we could carry out an
analysis along the lines of Theorem~\ref{theo:existray} in
Section~\ref{sect:OER}. There we showed that each monodromy orbit of 
extremal rays $R$ arises as from an extremal rational curve 
$$\bP^1 \subset S^{[n]} \simeq X$$
where $(S,A)$ is a polarized K3 surface, perhaps of very large degree. The approach
was to show that the {\em only} vectors in $\Pic(X)$ with `small' norm 
are $\delta$ and $\varrho$, a positive integer multiple of $R$.

What happens when we apply the construction above to such an $X\simeq S^{[n]}$?
The isomorphism 
$$\alpha: H^2(X,\bZ)\stackrel{\sim}{\ra}  H^2(Y,\bZ)$$
implies
$\Pic(Y)\simeq \Pic(X)$ as lattices, so their
small vectors coincide. Furthermore, we may choose 
$Y\simeq T^{[n]}$
where $(T,B)$ is a polarized K3 surface isogenous to $(S,A)$, i.e.,
we have isomorphisms of polarized integral Hodge structures
$$H^2(S,\bZ) \supset A^{\perp} \simeq B^{\perp} \subset H^2(T,\bZ).$$
Moreover, we may assume that $\delta_X$ is taken to $\delta_Y$,
i.e., the extremal curve class $\delta^{\vee}_{S^{[n]}}$ maps to $\delta^{\vee}_{T^{[n]}}$.
Consequently, there exists an ample divisor on $S^{[n]}$---for instance,
$g:=N A -\delta_{S^{[n]}}$ for $N\gg 0$---that goes to an ample divisor $h=\alpha(g)$ on $T^{[n]}$.

Let $\varrho \in \Pic(X)$ denote the class arising as a positive multiple
of the extremal ray; note that $\varrho=\pm a'$ in the notation above.
Now $\alpha(\varrho)$ does not correspond to an effective class, so the second
extremal ray on $Y$ corresponds to a subsequent vector of `small' norm,
i.e., 
$$\alpha^*(\text{cone of effective curves on $X$})\subsetneq
\text{cone of effective curves on $Y$}.$$

\begin{rema}
Markman has independently obtained an example along
these lines; it is also
of K3 type,
deformation equivalent to $S^{[7]}$.
\end{rema}

\begin{rema}
Explicit descriptions of cones of divisors
on generalized Kummer manifolds have been
found by Yoshioka \cite{Yosh}. 
Qualitative descriptions of these cones, with applications
to the Kawamata-Morrison conjecture, have been established by
Markman-Yoshioka \cite{MarkYosh} and Amerik-Verbitsky 
\cite{AmerVerb}.
\end{rema}

\bibliographystyle{alpha}
\bibliography{BMSurvey}

\begin{thebibliography}{BCNWS14}

\bibitem[AV14]{AmerVerb}
Ekaterina Amerik and Misha Verbitsky.
\newblock Morrison-{K}awamata cone conjecture for hyperkahler manifolds, 2014.
\newblock arXiv:1408.3892.

\bibitem[Bak13]{Bakker}
Benjamin Bakker.
\newblock A classification of extremal {L}angrangian planes in holomorphic
  symplectic varieties, 2013.
\newblock arXiv:1310.6341.

\bibitem[BCNWS14]{BCNWS}
Samuel Boissi\`ere, Andrea Cattaneo, Marc Nieper-Wisskirchen, and Alessandra
  Sarti.
\newblock The automorphism group of the {H}ilbert scheme of two points on a
  generic projective {$K3$} surface, 2014.
\newblock arXiv:1410.8387.

\bibitem[BHT13]{BHT}
Arend Bayer, Brendan Hassett, and Yuri Tschinkel.
\newblock Mori cones of holomorphic symplectic varieties of {$K3$} type.
\newblock {\em Annales scientifiques de l'\'Ecole normale sup\'erieure}, to
  appear, 2013.
\newblock arXiv:1307:2291.

\bibitem[BJ14]{BJ}
Benjamin Bakker and Andrei Jorza.
\newblock Lagrangian 4-planes in holomorphic symplectic varieties of
  {$\text{K3}^{[4]}$}-type.
\newblock {\em Cent. Eur. J. Math.}, 12(7):952--975, 2014.

\bibitem[BM14a]{BM2}
Arend Bayer and Emanuele Macr{\`{\i}}.
\newblock M{MP} for moduli of sheaves on {K}3s via wall-crossing: nef and
  movable cones, {L}agrangian fibrations.
\newblock {\em Invent. Math.}, 198(3):505--590, 2014.

\bibitem[BM14b]{BM1}
Arend Bayer and Emanuele Macr{\`{\i}}.
\newblock Projectivity and birational geometry of {B}ridgeland moduli spaces.
\newblock {\em J. Amer. Math. Soc.}, 27(3):707--752, 2014.

\bibitem[Eic74]{Eichler}
Martin Eichler.
\newblock {\em Quadratische {F}ormen und orthogonale {G}ruppen}.
\newblock Springer-Verlag, Berlin, 1974.
\newblock Zweite Auflage, Die Grundlehren der mathematischen Wissenschaften,
  Band 63.

\bibitem[GHS10]{GHS}
V.~Gritsenko, K.~Hulek, and G.~K. Sankaran.
\newblock Moduli spaces of irreducible symplectic manifolds.
\newblock {\em Compos. Math.}, 146(2):404--434, 2010.

\bibitem[HHT12]{HHT3}
David Harvey, Brendan Hassett, and Yuri Tschinkel.
\newblock Characterizing projective spaces on deformations of {H}ilbert schemes
  of {K}3 surfaces.
\newblock {\em Comm. Pure Appl. Math.}, 65(2):264--286, 2012.

\bibitem[HT09]{HT2}
Brendan Hassett and Yuri Tschinkel.
\newblock Moving and ample cones of holomorphic symplectic fourfolds.
\newblock {\em Geom. Funct. Anal.}, 19(4):1065--1080, 2009.

\bibitem[HT10a]{HTJus}
Brendan Hassett and Yuri Tschinkel.
\newblock Flops on holomorphic symplectic fourfolds and determinantal cubic
  hypersurfaces.
\newblock {\em J. Inst. Math. Jussieu}, 9(1):125--153, 2010.

\bibitem[HT10b]{HT4}
Brendan Hassett and Yuri Tschinkel.
\newblock Intersection numbers of extremal rays on holomorphic symplectic
  varieties.
\newblock {\em Asian J. Math.}, 14(3):303--322, 2010.

\bibitem[Huy99]{HuyBasic}
Daniel Huybrechts.
\newblock Compact hyper-{K}\"ahler manifolds: basic results.
\newblock {\em Invent. Math.}, 135(1):63--113, 1999.

\bibitem[Huy11]{huy}
Daniel Huybrechts.
\newblock A global {T}orelli theorem for hyperk\"ahler manifolds (after
  {M}.~{V}erbitsky), 2011.
\newblock S{\'e}minaire Bourbaki. Vol. 2010/2011, Exp. No. 1040.

\bibitem[HVA13]{HVA}
Brendan Hassett and Anthony V{\'a}rilly-Alvarado.
\newblock Failure of the {H}asse principle on general {$K3$} surfaces.
\newblock {\em J. Inst. Math. Jussieu}, 12(4):853--877, 2013.

\bibitem[HVAV11]{HVAV}
Brendan Hassett, Anthony V{\'a}rilly-Alvarado, and Patrick Varilly.
\newblock Transcendental obstructions to weak approximation on general {K}3
  surfaces.
\newblock {\em Adv. Math.}, 228(3):1377--1404, 2011.

\bibitem[HW60]{HW}
G.~H. Hardy and E.~M. Wright.
\newblock {\em An introduction to the theory of numbers}.
\newblock The Clarendon Press Oxford University Press, London, fourth edition,
  1960.

\bibitem[Mar08]{MarkJAG}
Eyal Markman.
\newblock On the monodromy of moduli spaces of sheaves on {$K3$} surfaces.
\newblock {\em J. Algebraic Geom.}, 17(1):29--99, 2008.

\bibitem[Mar11]{MarkSurv}
Eyal Markman.
\newblock A survey of {T}orelli and monodromy results for
  holomorphic-symplectic varieties.
\newblock In {\em Complex and differential geometry}, volume~8 of {\em Springer
  Proc. Math.}, pages 257--322. Springer, Heidelberg, 2011.

\bibitem[MSTVA14]{McKSaTaVA}
Kelly McKinnie, Justin Sawon, Sho Tanimoto, and Anthony V\'arilly-Alvarado.
\newblock Brauer groups on {$K3$} surfaces and arithmetic applications, 2014.
\newblock arXiv:1404.5460.

\bibitem[Muk06]{MukaiGenus13}
Shigeru Mukai.
\newblock Polarized {$K3$} surfaces of genus thirteen.
\newblock In {\em Moduli spaces and arithmetic geometry}, volume~45 of {\em
  Adv. Stud. Pure Math.}, pages 315--326. Math. Soc. Japan, Tokyo, 2006.

\bibitem[MY14]{MarkYosh}
Eyal Markman and Kota Yoshioka.
\newblock A proof of the {K}awamata-{M}orrison conjecture for holomorphic
  symplectic varieties of ${K}3^{[n]}$ or generalized {K}ummer deformation
  type, 2014.
\newblock arXiv:1402.2049.

\bibitem[O'G99]{ogrady}
Kieran~G. O'Grady.
\newblock Desingularized moduli spaces of sheaves on a {$K3$}.
\newblock {\em J. Reine Angew. Math.}, 512:49--117, 1999.

\bibitem[Ogu14]{oguiso}
K.~Oguiso.
\newblock Some aspects of explicit birational geometry inspired by complex
  dynamics, 2014.
\newblock arXiv:1404.2982.

\bibitem[Ver13]{verb}
Misha Verbitsky.
\newblock Mapping class group and a global {T}orelli theorem for hyperkahler
  manifolds.
\newblock {\em Duke Mathematical Journal}, 162(15):2929--2986, 2013.
\newblock arXiv:0908.4121.

\bibitem[Yos12]{Yosh}
Kota Yoshioka.
\newblock Bridgeland's stability and the positive cone of the moduli spaces of
  stable objects on an abelian surface, 2012.
\newblock arXiv:1206.4838.

\end{thebibliography}

\end{document}